\newtheorem{Thm}{Theorem}[section] 
\newtheorem{Lem}[Thm]{Lemma} 
\newtheorem{Cor}[Thm]{Corollary} 
\theoremstyle{definition}
\newtheorem{Rem}[Thm]{Remark} 
\theoremstyle{definition}
\newtheorem{Def}[Thm]{Definition}
\numberwithin{equation}{section}
\newcommand{\Clg}{\mathrm{Clg}}
\newcommand{\Cid}{\mathrm{Cig}}
\newcommand{\tD}{\mathrm{tD}}
\newcommand{\Clo}{\mathrm{Clo}}
\newcommand{\rem}{\mathrm{rem}}
\newcommand{\FF}{{\mathbb F}}
\newcommand{\KK}{{\mathbb K}}
\newcommand{\ZZ}{{\mathbb{Z}}}
\newcommand{\NN}{{\mathbb{N}}}
\DeclareMathAlphabet\mathbfsl {T1}{cmr}{bx}{it}
\title[Clones on $\ZZ_{n}$]{Expansions of abelian squarefree groups}
\author{Stefano Fioravanti}
\address{Stefano Fioravanti,
	Institut f\"ur Algebra,
	Johannes Kepler Universit\"at Linz,
	4040 Linz,
	Austria}
\email{\tt stefano.fioravanti66@gmail.com}
\subjclass{08A40}
\urladdr{http://www.jku.at/algebra}
\thanks{Supported by the Austrian Science Fund (FWF): P29931.}
\keywords{Clonoids, Clones}
\date{\today}
\begin{document}
	
	\begin{abstract}
		
		We investigate finitary functions from $\mathbb{Z}_{n}$ to $\mathbb{Z}_{n}$ for a squarefree number $n$. We show that the lattice of all clones on the squarefree set $\mathbb{Z}_{p_1\cdots p_m}$ which contain the addition of $\mathbb{Z}_{p_1\cdots p_m}$ is finite. We provide an upper bound for the cardinality of this lattice through an injective function to the direct product of the lattices of all $(\ZZ_{p_i}, \FF_i)$-linearly closed clonoids, $\mathcal{L}(\ZZ_{p_i}, \FF_i)$, to the $p_i+1$ power, where $\FF_i = \prod_{j \in \{1,\dots,m\}\backslash \{i\}}\ZZ_{p_j}$. These lattices are studied in \cite{Fio.CSOF2} and there we can find an upper bound for their cardinality. Furthermore, we prove that these clones can be generated by a set of functions of arity at most $\max(p_1,\dots,p_m)$.
		
	\end{abstract}
	\maketitle
	
	\section{Introduction}
	
	The investigation of the lattice of all clones on a set $A$ has been a fecund field of research in general algebra with results such as Emil Post's characterization of the lattice of all clones on a two-element set \cite{Pos.TTVI}. This branch was developed further, e. g., in \cite{Ros.MCOA,PK.FUR,Sze.CIUA} and starting from \cite{BJK.TCOC}, clones are used to study the complexity of certain constraint satisfaction problems (CSPs).
	
	The aim of this paper is to describe the lattice of those clones on the set $\ZZ_{n}$ that contain the operation of addition of $\ZZ_{n}$,  with $n$ squarefree. Thus we want to study the part of the lattice of all clones on $\ZZ_{n}$ which is above the clone of all linear mappings. 
	
	In \cite{Idz.CCMO} P. Idziak characterized the number of polynomial Mal'cev clones (clones containing the constants and a Mal'cev term) on a finite set $A$, which is finite if and only if $|A|\leq 3$. In \cite{Bul.PCCT} A. Bulatov shows a full characterization of all infinitely many polynomial clones on the sets $\ZZ_p \times \ZZ_p$ and $\ZZ_{p^2}$ that contain $+$, where $p$ is a prime. Moreover, a description of polynomial clones on $\ZZ_{pq}$ containing the addition for distinct primes $p$ and $q$ is given in \cite{AM.PCOG} and polynomial clones containing $+$ on $\ZZ_n$, for $n$ squarefree, are described in \cite{May.PCOS}.
	
	In \cite{Kre.CFSO} S. Kreinecker proved that there are infinitely many non-finitely generated clones above the clone $\Clo( \mathbb{Z}_p \times \mathbb{Z}_p , +)$ of term operations of the group $(\mathbb{Z}_p \times \mathbb{Z}_p , +)$  for any prime $p > 2$. 
	
	Let $C$ be a set of functions. We denote by $C^{[n]}$ the subset of $n$-ary functions in $C$. In this paper we will make often use of the concept of $(\FF,\KK)$-linearly closed clonoid as defined in \cite[Definition $1.1$]{Fio.CSOF2} (generalization of \cite[Definition $1.1$]{Fio.CSOF}). We recall this definition.
	
	\begin{Def}
		\theoremstyle{definition}
		\label{DefClo-2}
		Let $m,s \in \NN$, let $q_1,\dots,q_m,p_1,\dots p_s$ be powers of primes, and let $\KK= \prod_{i=1}^m\mathbb{F}_{q_i}$, $\FF= \prod_{i=1}^s\mathbb{F}_{p_i}$ be products of fields of orders $q_1,\dots,q_m,p_1,\dots p_s$. An \emph{$(\FF,\KK)$-linearly closed clonoid} is a non-empty subset $C$ of $\bigcup_{k \in \mathbb{N}} \prod_{i=1}^s\mathbb{F}_{p_i}^{{\prod_{j=1}^m\mathbb{F}_{q_j}^k}}$ with the following properties:
		
		\begin{enumerate}
			\item[(1)] for all $n \in \NN$, $\mathbfsl{a}, \mathbfsl{b} \in \prod_{i=1}^s\mathbb{F}_{p_i}$, and $f,g \in C^{[n]}$:
			
			\begin{equation*}
				\mathbfsl{a}f + \mathbfsl{b}g \in C^{[n]};
			\end{equation*}
			
			\item[(2)] for all $l,n \in \NN$, $f \in C^{[n]}$, $(\mathbfsl{x}_1,\dots,\mathbfsl{x}_m) \in \prod_{j=1}^m\mathbb{F}_{q_j}^l$, and $A_i \in \mathbb{F}^{n \times l}_{q_i}$:
			
			\begin{equation*}
				g\colon (\mathbfsl{x}_1,\dots,\mathbfsl{x}_m) \mapsto f(A_1\cdot \mathbfsl{x}_1^t,\cdots,A_m\cdot \mathbfsl{x}_m^t) \text{ is in } C^{[l]},
			\end{equation*}
			
		\end{enumerate}
		where with the juxtaposition $\mathbfsl{a}f$ we denote the Hadamard product of the two vectors (i.e. the component-wise product $(a_1,\dots,a_n)\cdot (b_1,\dots,b_n) = (a_1b_1,\dots,a_nb_n)$).
	\end{Def}
	
	In \cite[Theorems $1.2$ and $1.3$]{Fio.CSOF2} we can find a complete description of the lattice of all $(\FF,\KK)$-linearly closed clonoids with $\FF$ and $\KK$ products of finite fields of pair-wise coprime order.
	
	The main result of this chapter regards the cardinality of the lattice of all clones on $\ZZ_{s}$ that contain $\Clo(\ZZ_{s},+)$, where $s$ is squarefree.
	
	\begin{Thm}
		\label{Thmgeneralembedding-2}
		Let $s = p_1\cdots p_m$ be a product of distinct primes and let $\FF_i = \prod_{j \in [m]\backslash \{i\}}\ZZ_{p_j}$ for all $1 \leq i \leq n$. Then there is an injective function from the lattice $\mathcal{L}( \ZZ_{s},+)$ of all clones containing $\Clo( \ZZ_{s},+)$, to the direct product of the lattices of all $(\ZZ_{p_i}, \FF_i)$-linearly closed clonoids, $\mathcal{L}(\ZZ_{p_i}, \FF_i)$, to the $p_i+1$ power, i. e:
		
		\begin{equation*}
			\mathcal{L}( \ZZ_{s},+)\hookrightarrow \prod_{i=1}^n\mathcal{L}(\ZZ_{p_i},\FF_i)^{p_i+1}.		
		\end{equation*}
	\end{Thm}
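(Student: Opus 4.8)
The plan is to use the Chinese Remainder Theorem to identify $\ZZ_s$ with $\prod_{i=1}^m\ZZ_{p_i}$ and then to read off, from a clone $C\supseteq\Clo(\ZZ_s,+)$, a tuple of linearly closed clonoids. Writing $e_i\in\ZZ_s$ for the central idempotent with $e_i\equiv 1\pmod{p_i}$ and $e_i\equiv 0\pmod{p_j}$ for $j\neq i$, I note that the unary map $x\mapsto e_i x$ and all $\ZZ_s$-scalar multiplications already lie in $\Clo(\ZZ_s,+)$, hence in $C$, since each is an integer multiple of the identity. Consequently $C$ is closed under the componentwise projections $f\mapsto e_i f$ and under substituting $e_j$-multiples into inputs, which lets me decompose every $f\in C^{[k]}$ into its CRT-components $f_i\colon\prod_{j=1}^m\ZZ_{p_j}^{k}\to\ZZ_{p_i}$ and to separate, inside each $f_i$, the dependence on the block $\mathbf x^{(i)}\in\ZZ_{p_i}^{k}$ from the dependence on the complementary block $\mathbf y\in\FF_i^{k}$. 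The dependence on $\mathbf y$ is exactly of the type governed by an $(\ZZ_{p_i},\FF_i)$-linearly closed clonoid, because the $\FF_{p_j}$-linear input substitutions demanded by condition~(2) of Definition~\ref{DefClo-2} are precisely the CRT-components (for $j\neq i$) of $\ZZ_s$-linear operations, all of which lie in $C$.

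Next I would define the embedding. The dependence of each component $f_i$ on its own block $\mathbf x^{(i)}$, which ranges over the field $\ZZ_{p_i}$, can be separated out using the linear operations available in $C$: probing $f_i$ along single-variable linear substitutions $\mathbf x^{(i)}=t\mathbf v$ and using that a function of $t\in\ZZ_{p_i}$ is a polynomial of degree $<p_i$, one obtains a finite family of $p_i+1$ derived functions $\FF_i^{k}\to\ZZ_{p_i}$ recording the behavior of $f_i$ along the possible linear dependences on the block variable together with its residual $\FF_i$-part. Letting $f$ range over $C$ and $k$ over all arities, the $r$-th position of this family produces a set $C_{i,r}\subseteq\bigcup_k\ZZ_{p_i}^{\FF_i^{k}}$, and I would verify that each $C_{i,r}$ satisfies conditions~(1) and~(2) of Definition~\ref{DefClo-2}. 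Closure under $\ZZ_{p_i}$-linear combinations follows because $C$ is closed under $+$ and scalar multiplication and these act on each position of the family separately, while closure under $\FF_i$-linear precomposition follows from the matching $\ZZ_s$-linear input substitutions in $C$. This defines the candidate map $C\mapsto(C_{i,r})_{i,r}\in\prod_{i=1}^m\mathcal L(\ZZ_{p_i},\FF_i)^{p_i+1}$.

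Order-preservation is then immediate from the construction: if $C\subseteq C'$, every derived function of $C$ is a derived function of $C'$, so $C_{i,r}\subseteq C'_{i,r}$ for all $i$ and $r$. The real content of the theorem is injectivity, equivalently that the map reflects the order, and this is where I expect the main obstacle to lie. The key is a reconstruction lemma: a function $f$ on $\ZZ_s$ belongs to $C$ if and only if, for every $i$, each of the $p_i+1$ derived functions of $f_i$ lies in the corresponding clonoid $C_{i,r}$. The forward implication holds by construction; the reverse requires showing that $f$ can be assembled inside $C$ from members of $C$ using only $+$, scalar multiplications, the idempotents $e_i$, and $\ZZ_s$-linear input substitutions. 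Concretely I would recover each $e_i f$, equivalently the component $f_i$, from its $p_i+1$ pieces by a Lagrange-type interpolation over $\ZZ_{p_i}$ in the block variable, and then sum over $i$ using $\sum_{i=1}^m e_i=1$, checking at each step that the intermediate operations remain in $C$.

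The delicate point—and the reason the family must have exactly $p_i+1$ members—is that the chosen pieces must carry enough information to interpolate an arbitrary $\ZZ_{p_i}$-dependence while each piece individually remains a legitimate element of a clonoid. I expect the correct index set to have cardinality $p_i+1$ (matching the size of the projective line over $\ZZ_{p_i}$), and identifying it precisely, then proving that the reconstruction it supports stays inside $C$, is the technically hardest part of the argument. Balancing completeness of the interpolation against the clonoid closure conditions in Definition~\ref{DefClo-2}, and invoking the structural description of $\mathcal L(\ZZ_{p_i},\FF_i)$ from \cite{Fio.CSOF2} where needed, is where I would concentrate the effort.
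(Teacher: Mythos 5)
Your overall architecture matches the paper's at a high level: decompose $f$ into its CRT components, attach to each clone $C$ a tuple of $p_i+1$ linearly closed clonoids per prime, and prove injectivity by reconstructing $f$ from that data. But the one concrete mechanism you propose for producing the $p_i+1$ clonoids would not work, and the step you explicitly defer is the entire content of the theorem. The paper's tuple is indexed by $j\in\{0,1,\dots,p_i\}$ via $\rho_{(i,j)}(C)=\bigcup_n\{f\colon\FF_i^n\to\ZZ_{p_i}\mid \overline{fx_1\cdots x_j}\in C\}$, i.e.\ by the coefficient functions of \emph{multilinear} induced monomials of degree $j$. The finiteness of the index set has nothing to do with the projective line or with univariate interpolation: it comes from Lemma~\ref{LemMonVeri-2}, which shows that an induced monomial of total degree $u\geq 2$ is interderivable with $\overline{rx_1\cdots x_d}$ for the unique $d$ with $2\leq d\leq p_i$ and $d\equiv u \pmod{p_i-1}$, the cases $d=0,1$ being handled separately; that is why $\{0,1,\dots,p_i\}$ suffices. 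Your ``probe along $\mathbf x^{(i)}=t\mathbf v$'' idea treats the $\ZZ_{p_i}$-block as a single variable, but the dependence of $f_i$ on that block is genuinely multivariate: its expansion has one coefficient function $f_{\mathbfsl{m}}$ for each exponent vector $\mathbfsl{m}\in[p_i-1]_0^k$, a number that grows with the arity $k$, so no fixed family of $p_i+1$ functions recording a univariate polynomial dependence can capture it. (A univariate function on $\ZZ_{p_i}$ also has $p_i$ polynomial coefficients, not $p_i+1$, so even the counting heuristic does not come out right.)

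The larger gap is the claim you label a ``reconstruction lemma'' and postpone. Both directions of it are nontrivial. Showing that $f\in C$ forces each coefficient function $f_{\mathbfsl{m}}$ into the corresponding clonoid requires extracting a single induced monomial $\overline{f_{\mathbfsl{m}}\mathbfsl{x}^{\mathbfsl{m}}}$ from the sum $f$ \emph{inside the clone} $C$; this is exactly what Lemmata~\ref{Lempclonoids}, \ref{LemFoundPcloni}, \ref{LemMoninZpq}, \ref{LemConnInduced-2} and \ref{Lemfcontmon-2} accomplish, by adapting Kreinecker's monomial-separation technique to polynomial rings whose coefficients are functions $\FF_i\to\ZZ_{p_i}$ and then transferring generation in the polynomial clonoid to generation in the clone. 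The converse direction then reassembles $f$ from the monomials and sums over the idempotents, as you suggest. Without a substitute for this monomial machinery your argument is a plausible plan rather than a proof, and Lagrange-type interpolation in the block variable is not such a substitute.
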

	
We will prove Theorem \ref {Thmgeneralembedding-2} in Section \ref{SAGB}. Vice versa, we find also an embedding of the lattice of all $(\ZZ_{p_1},$ $\prod_{i =2}^m\ZZ_{p_i})$-linearly closed clonoids into the lattice of all clones above $\Clo(\ZZ_{p_1\cdots p_m},+)$, where $p_1,\dots,p_m$ are not necessarily distinct prime numbers.

\begin{Thm}
	\label{ThEmbClonoids-2}
	Let $p_1,\dots,p_m$ be prime numbers and let $\FF_1 = \prod_{i =2}^m\ZZ_{p_i}$. Then the lattice of all $(\ZZ_{p_1},\FF_1)$-linearly closed clonoids is embedded in the lattice of all clones above $\Clo(\ZZ_{p_1\cdots p_m},+)$.
\end{Thm}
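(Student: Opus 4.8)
The plan is to exhibit an explicit lift of clonoid functions to operations on $\ZZ_s$, where $s = p_1\cdots p_m$, and then to identify the generated clone precisely. Write $\ZZ_s \cong \ZZ_{p_1}\times\FF_1$ by the Chinese Remainder Theorem, so that every coordinate $x\in\ZZ_s$ splits as $(x^{(1)},\bar x)$ with $x^{(1)}\in\ZZ_{p_1}$, $\bar x\in\FF_1$, and every $f\colon\ZZ_s^k\to\ZZ_s$ splits into a $\ZZ_{p_1}$-component and an $\FF_1$-component. For $g\colon\FF_1^k\to\ZZ_{p_1}$ in a $(\ZZ_{p_1},\FF_1)$-linearly closed clonoid $C$, I define its lift $\hat g\colon\ZZ_s^k\to\ZZ_s$ by $\hat g(x_1,\dots,x_k)=e\bigl(g(\bar x_1,\dots,\bar x_k)\bigr)$, where $e\colon\ZZ_{p_1}\to\ZZ_s$ is the embedding sending $a$ to the element that is $a$ modulo $p_1$ and $0$ modulo every $p_i$ with $i\ge 2$. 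I then set $\Phi(C)$ to be the clone generated by $\Clo(\ZZ_s,+)$ together with all lifts $\{\hat g : g\in C\}$; by construction this is a clone containing $\Clo(\ZZ_s,+)$, and $\Phi$ is visibly monotone.

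The technical heart is a structural description of $\Phi(C)$: I claim
\[
\Phi(C)=\{\,\ell+\hat g \;:\; \ell\in\Clo(\ZZ_s,+),\ g\in C\,\}.
\]
To prove this I show the right-hand side is closed under composition and contains the generators, forcing equality. The key computation uses the two closure axioms of Definition \ref{DefClo-2}: when composing $\ell+\hat g$ with operations $\ell_j+\hat{g_j}$, the $\FF_1$-component of each $\hat{g_j}$ vanishes, so the argument seen by $g$ is exactly the $\FF_1$-part of the linear map $\ell_j$, i.e.\ a tuple of $\ZZ_{p_i}$-linear maps for $i\ge 2$; thus $g$ is precomposed with matrices over $\ZZ_{p_2},\dots,\ZZ_{p_m}$, which is closure $(2)$, while the additive recombination of several lifts is a $\ZZ_{p_1}$-linear (Hadamard) combination, which is closure $(1)$. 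Both stay inside $C$, and the remaining summand stays inside $\Clo(\ZZ_s,+)$. I expect this to be the main obstacle, since one must verify that these are the only functions produced, including that the degenerate constants arising when two lifts are composed (where the inner $\FF_1$-parts are $0$) are themselves lifts of members of $C$, which follows from axiom $(2)$ applied with zero matrices.

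Granting the structural description, injectivity and the lattice-homomorphism property are short. I recover $C$ by the retraction $\rho(f)=\bigl(\bar{\mathbf b}\mapsto[\,f(\iota(\bar{\mathbf b}))\,]_{p_1}\bigr)$, where $\iota\colon\FF_1^k\to\ZZ_s^k$ embeds a tuple with zero $\ZZ_{p_1}$-parts and $[\cdot]_{p_1}$ reads the $\ZZ_{p_1}$-output; since a linear $\ell$ has no constant term, its $\ZZ_{p_1}$-component vanishes on such inputs, so $\rho(\ell+\hat g)=g$ and hence $\rho(\Phi(C))=C$, giving injectivity of $\Phi$. For meets, if $f=\ell+\hat g=\ell'+\hat{g'}$ lies in $\Phi(C_1)\cap\Phi(C_2)$, then evaluating on zero-$\ZZ_{p_1}$ inputs forces $g=g'\in C_1\cap C_2$, whence also $\ell=\ell'$, so $\Phi(C_1)\cap\Phi(C_2)=\Phi(C_1\cap C_2)=\Phi(C_1\wedge C_2)$. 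For joins, the clone generated by $\Clo(\ZZ_s,+)$ and the lifts of $C_1\cup C_2$ equals, by the same structural description applied to $C_1\vee C_2=\langle C_1\cup C_2\rangle$, exactly $\Phi(C_1\vee C_2)$, so $\Phi(C_1)\vee\Phi(C_2)=\Phi(C_1\vee C_2)$. Thus $\Phi$ is an injective lattice homomorphism, i.e.\ the desired embedding.
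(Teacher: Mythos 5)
Your proposal is correct and follows essentially the same route as the paper: your lift $\hat g$ is the paper's $e_1(g)$, your structural description $\Phi(C)=\{\ell+\hat g\}$ is exactly how the paper defines its map $\gamma_1$, and the composition computation, the injectivity argument via zero-$\ZZ_{p_1}$ inputs, and the meet/join verifications all match. The only place the paper is more explicit is the join, where it runs an induction over the stagewise generation of $C_1\vee C_2$ (its Lemma \ref{Lemclogen-2}) to justify the step you summarize as ``by the same structural description applied to $C_1\vee C_2$.''
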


	From these results we can obtain bounds for the number of clones on $\ZZ_{s}$ that contain $\Clo(\ZZ_{s},+)$.
	
	\begin{Cor}
		\label{Corfinale-2}
		Let $s = p_1\cdots p_m$ be a product of distinct primes and let $\FF_i = \prod_{j \in [n]\backslash \{i\}} \ZZ_{p_j}$. Then the number of clones containing $\Clo(\ZZ_{s},+)$ is bounded by:
		
		\begin{equation*}
			\sum_{i =1}^m|\mathcal{L}(\ZZ_{p_i}, \FF_i)|  -m +1\leq |\mathcal{L}(\ZZ_{s},+)| \leq \prod_{i=1}^m|\mathcal{L}(\ZZ_{p_i}, \FF_i)|^{p_i+1}.
		\end{equation*}
	\end{Cor}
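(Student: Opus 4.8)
The plan is to derive both inequalities from Theorem \ref{Thmgeneralembedding-2} together with an explicit construction. The \emph{upper bound} is immediate: since the map of Theorem \ref{Thmgeneralembedding-2} is injective, the cardinality of its domain cannot exceed the cardinality of its codomain, and the codomain $\prod_{i=1}^m \mathcal{L}(\ZZ_{p_i}, \FF_i)^{p_i+1}$ has exactly $\prod_{i=1}^m |\mathcal{L}(\ZZ_{p_i}, \FF_i)|^{p_i+1}$ elements. This yields $|\mathcal{L}(\ZZ_s, +)| \leq \prod_{i=1}^m |\mathcal{L}(\ZZ_{p_i}, \FF_i)|^{p_i+1}$ at once.

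For the \emph{lower bound} I would produce, for each index $i \in [m]$, an injective map $\phi_i \colon \mathcal{L}(\ZZ_{p_i}, \FF_i) \to \mathcal{L}(\ZZ_s, +)$ whose images, for distinct indices, overlap in exactly one common clone. Writing $\ZZ_s \cong \prod_{j=1}^m \ZZ_{p_j}$ via the Chinese Remainder Theorem, let $e_i \in \ZZ_s$ be the idempotent that is $1$ modulo $p_i$ and $0$ modulo $p_j$ for $j \neq i$; multiplication by $e_i$ is a term operation of $(\ZZ_s, +)$ and projects onto the $i$-th component. Given a $(\ZZ_{p_i}, \FF_i)$-linearly closed clonoid $C$, I would lift each function in $C$ to $\ZZ_s$ through this $i$-th coordinate and define $\phi_i(C)$ to be the clone generated by $\Clo(\ZZ_s, +)$ together with these lifts. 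Because the lifts live entirely in the $i$-th CRT component, the clones $\phi_i(C)$ carry the data of $C$ in coordinate $i$ and the data of the minimal clonoid in every coordinate $j \neq i$.

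The crucial point is then to read off the clonoid $C$ from the clone $\phi_i(C)$ using the coordinate maps already built in the proof of Theorem \ref{Thmgeneralembedding-2}. This recovers $C$ from $\phi_i(C)$, hence shows each $\phi_i$ is injective, and it simultaneously shows that a clone whose $i$-th coordinate is $C$ and whose other coordinates are minimal can agree with a clone whose $j$-th coordinate ($j \neq i$) is $C'$ and whose other coordinates are minimal only when both collapse to $\Clo(\ZZ_s, +)$; that is, $\phi_i(\mathcal{L}(\ZZ_{p_i}, \FF_i)) \cap \phi_j(\mathcal{L}(\ZZ_{p_j}, \FF_j)) = \{\Clo(\ZZ_s, +)\}$ for $i \neq j$, since $\Clo(\ZZ_s, +)$ is the common image of the respective minimal clonoids. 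Counting the distinct clones in $\bigcup_{i=1}^m \phi_i(\mathcal{L}(\ZZ_{p_i}, \FF_i))$ by inclusion--exclusion then gives $1 + \sum_{i=1}^m (|\mathcal{L}(\ZZ_{p_i}, \FF_i)| - 1) = \sum_{i=1}^m |\mathcal{L}(\ZZ_{p_i}, \FF_i)| - m + 1$ clones above $\Clo(\ZZ_s, +)$, which is the desired lower bound.

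I expect the \emph{main obstacle} to be verifying that $\phi_i$ is a genuine injection into the clone lattice --- that is, that the clone generated by $\Clo(\ZZ_s,+)$ and the $i$-th coordinate lifts of $C$ does not pick up, through composition, any new functions in coordinate $i$ beyond those forced by $C$, nor any nontrivial functions in the coordinates $j \neq i$. This closure computation is exactly where the defining properties (1) and (2) of an $(\ZZ_{p_i}, \FF_i)$-linearly closed clonoid must be matched against composition inside the clone, and it is the step that makes essential use of the correspondence established in Theorem \ref{Thmgeneralembedding-2}.
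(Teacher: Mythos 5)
Your proposal is correct and follows essentially the same route as the paper: the upper bound is read off from the injectivity in Theorem \ref{Thmgeneralembedding-2}, and the lower bound comes from the $m$ coordinate-wise lattice embeddings whose images pairwise meet only in the clone of linear mappings, counted exactly as you do ($1+\sum_{i=1}^m(|\mathcal{L}(\ZZ_{p_i},\FF_i)|-1)$). The only difference is that your maps $\phi_i$ are precisely the embeddings $\gamma_i$ already established in Theorem \ref{ThEmbClonoids-2}, so the closure and injectivity verifications you flag as the main obstacle are not needed afresh --- the paper simply cites that theorem.
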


	We will prove Corollary \ref {Corfinale-2} in Section \ref{SAGB}. This corollary extends the finiteness results of \cite{May.PCOS} for clones containing $\Clo( \ZZ_s,+ )$ which do not necessarily contain constants with $s$ squarefree. 
	
	We can also use Theorem \ref{Thmgeneralembedding-2} to find a concrete bound on the arity of the generators of clones containing $\Clo(\ZZ_{s},+)$, with $s$ squarefree.
	
	\begin{Cor}
		\label{CorArFun-2}
		Let $s=p_1\cdots p_m$ be a product of distinct prime numbers. Then every clone containing $\Clo(\ZZ_{s},+)$ can be generated by a set of functions of arity at most $\max(p_1,\dots,p_m)$.
	\end{Cor}
	
	The last theorem states that there is a dichotomy for the cardinalities of the clones of finite expanded abelian groups.
	
	\begin{Thm}\label{ThmDichot}
		
		Let $\mathbf{G}$ be a finite abelian group. Then $\mathbf{G}$ has finitely many expansions up to term equivalence or, equivalently, the lattice of all clones containing $\Clo( G,+,-,0)$ is finite if and only if $\mathbf{G}$ is of squarefree order.
	\end{Thm}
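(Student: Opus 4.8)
The plan is to prove the dichotomy by treating the two directions separately, reducing the finiteness half to Corollary~\ref{Corfinale-2} and the infiniteness half to the known infinite families on $\ZZ_{p^2}$ and $\ZZ_p\times\ZZ_p$ recorded in \cite{Bul.PCCT} and \cite{Kre.CFSO}. Throughout I identify ``expansions up to term equivalence'' with ``clones containing $\Clo(G,+,-,0)$'', so it suffices to decide when the interval $[\Clo(G,+,-,0),\mathcal{O}_G]$ in the clone lattice is finite.

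For the direction assuming $\mathbf{G}$ squarefree, I would use the structure theorem together with the Chinese Remainder Theorem to get $\mathbf{G}\cong(\ZZ_s,+)$ with $s=p_1\cdots p_m$ a product of distinct primes. Since $\ZZ_s$ has exponent $s$, both $-x=(s-1)x$ and the constant $0=sx$ are already term operations of $(\ZZ_s,+)$, so $\Clo(G,+,-,0)=\Clo(\ZZ_s,+)$ and the interval in question coincides with $\mathcal{L}(\ZZ_s,+)$. By Corollary~\ref{Corfinale-2} this lattice is finite, each factor $\mathcal{L}(\ZZ_{p_i},\FF_i)$ being finite by the cited results of \cite{Fio.CSOF2}, which gives finiteness.

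For the converse I argue the contrapositive: if $|G|$ is not squarefree, then $p^2\mid|G|$ for some prime $p$, and I claim the interval is infinite. First I reduce to a single generator. Writing $\mathbf{G}=\mathbf{G}_p\times\mathbf{G}_{p'}$ for the Sylow decomposition and $\mathbf{G}_p\cong\ZZ_{p^{a_1}}\times\cdots\times\ZZ_{p^{a_k}}$ with $\sum a_i\geq 2$, either some $a_i\geq 2$, in which case $\ZZ_{p^2}$ is a homomorphic image of $\mathbf{G}$, or all $a_i=1$ and $k\geq 2$, in which case $\ZZ_p\times\ZZ_p$ is a homomorphic image of $\mathbf{G}$. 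So in either case $\mathbf{G}$ admits a surjection $q\colon\mathbf{G}\twoheadrightarrow\mathbf{H}$ onto a group $\mathbf{H}\in\{\ZZ_{p^2},\ZZ_p\times\ZZ_p\}$ with kernel $N$. By \cite{Bul.PCCT} there are infinitely many polynomial clones on $\mathbf{H}$ containing $+$; since every such clone contains $\Pol(\mathbf{H})\supseteq\Clo(\mathbf{H},+,-,0)$, the interval $[\Clo(\mathbf{H},+,-,0),\mathcal{O}_H]$ is already infinite, and for $\mathbf{H}=\ZZ_p\times\ZZ_p$ with $p>2$ one may alternatively invoke the infinite family of \cite{Kre.CFSO}.

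The transport of this infinitude along $q$ is the main obstacle. I plan to prove a lifting lemma: the assignment sending a clone $D\supseteq\Clo(\mathbf{H},+,-,0)$ to
\begin{equation*}
\Phi(D)=\{\,f\in\mathcal{O}_G\mid f \text{ respects the congruence induced by } N \text{ and } f/N\in D\,\}
\end{equation*}
is an injective, order-preserving map from $[\Clo(\mathbf{H},+,-,0),\mathcal{O}_H]$ into $[\Clo(G,+,-,0),\mathcal{O}_G]$. The verification splits into three parts. First, $\Phi(D)$ is a clone, being the preimage of the subclone $D$ under the clone homomorphism that reduces an $N$-compatible operation modulo $N$. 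Second, $\Phi(D)\supseteq\Clo(G,+,-,0)$, because the group operations are $N$-compatible and reduce to the group operations of $\mathbf{H}$, which lie in $D$. Third, $\Phi$ is injective: given $\bar f\in D_1\setminus D_2$, a set-theoretic section of $q$ yields an $N$-compatible lift $f$ of $\bar f$, whence $f\in\Phi(D_1)\setminus\Phi(D_2)$. Then $\Phi$ carries the infinite family on $\mathbf{H}$ to an infinite family on $\mathbf{G}$, so $[\Clo(G,+,-,0),\mathcal{O}_G]$ is infinite, completing the contrapositive. The points needing the most care are verifying that reduction modulo $N$ really is a \emph{surjective} clone homomorphism on the clone of $N$-compatible operations, so that preimages of distinct clones stay distinct, and confirming that Bulatov's polynomial clones genuinely sit inside the term-clone interval rather than only the polynomial-clone interval.
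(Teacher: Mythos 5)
Your proposal is correct, and its skeleton matches the paper's: finiteness for squarefree order comes from Corollary~\ref{Corfinale-2} (after observing that $-$ and $0$ are term operations of $(\ZZ_s,+)$, so the interval is just $\mathcal{L}(\ZZ_s,+)$), and infiniteness for non-squarefree order comes from the known infinite families of \cite{Bul.PCCT} and \cite{Kre.CFSO}. The difference is that the paper's proof of the converse direction is a one-line citation, whereas those references only treat $\ZZ_{p^2}$ and $\ZZ_p\times\ZZ_p$ directly; you supply the two missing steps, namely that every abelian group of non-squarefree order surjects onto one of these two groups, and a lifting lemma $\Phi$ that injects the interval above $\Clo(\mathbf{H},+,-,0)$ into the interval above $\Clo(G,+,-,0)$ by taking $N$-compatible preimages. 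Both steps are sound: $\Phi(D)$ is closed under composition because $N$-compatibility is preserved and reduction modulo $N$ commutes with composition, it contains the group term operations since they are $N$-compatible and reduce into $D$, and injectivity follows from the set-theoretic-section lift exactly as you describe (the lift of $\bar f\notin D_2$ is $N$-compatible with reduction $\bar f$, hence outside $\Phi(D_2)$). Your final caveat about Bulatov's clones is also easily discharged: his polynomial clones contain all constants and $+$, hence contain $\Clo(\mathbf{H},+,-,0)$, so they do sit in the relevant interval. In short, you prove the same theorem by the same route, but your write-up makes explicit a reduction that the paper leaves entirely to the reader.
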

	
	We will prove Theorem \ref {ThmDichot} in Section \ref{SAGB}. Thus this theorem shows a surprising dichotomy about the cardinalities of the clones of finite expanded abelian groups up term equivalence. Indeed, the order of a group seems to have no connection in principle with the finiteness of the lattice of all distinct clones up term equivalence above the linear mappings on it.

	\section{Preliminaries and notation}\label{Preliminaries3}
	
	We use boldface letters for vectors, e. g., \index{$\mathbfsl{u}$}$\mathbfsl{u} = (u_1,\dots,u_n)$ for some $n \in \NN$. Moreover, we will use $\langle\mathbfsl{v}, \mathbfsl{u}\rangle$ for the scalar product of the vectors $\mathbfsl{v}$ and $\mathbfsl{u}$. Let $A$ be a set and let $0_A \in A$. We denote by $\mathbf{0}_{n}$ a constant $0_A$ vector of length $n$.
	
	We denote by \index{$[n]$}$[n]$ the set $\{i \in \NN\mid 1 \leq i \leq n\}$ and by \index{$[n]_0$}$[n]_0$ the set $[n] \cup \{0\}$. Moreover we denote by \index{$\NN_0$}$\NN_0$ the set $\NN \cup \{0\}$. Let $\mathbfsl{x} \in \ZZ_p^n$ and let $\mathbfsl{a} \in [p-1]_0^n$. Then we denote by \index{$\mathbfsl{x}^{\mathbfsl{a}}$}$\mathbfsl{x}^{\mathbfsl{a}}$ the product $\prod_{i =1}^nx_i^{a_i}$. We use also convention that an empty product is $1$.
	
	From now on we will consider the group $\prod_{i=1}^m\ZZ_{p_i}$ instead of $\mathbb{Z}_{s}$, where $s = \prod_{i=1}^mp_i$ is squarefree. We can observe that the two groups are isomorphic and thus equivalent for our purpose. 
	
	Moreover, we consider $\prod_{i=1}^m\ZZ_{p_i}^n$ instead of $(\prod_{i=1}^m\ZZ_{p_i})^n$ as the domain of the $n$-ary functions we want to study.
	
	Let $S$ be a set of finitary functions from a group $G$ to itself . We denote by $\Clg(S)$ the \emph{clone generated by} $S \cup \{+\}$ on $G$. Let $\KK$ and $\FF$ be product of finite fields with pair-wise coprime order. We write \index{$\Cid(F)$}$\Cid(F)$ for the $(\FF,\KK)$-linearly closed clonoid generated by a set of functions $F \subseteq \bigcup_{k\in\NN} \FF^{\KK^k}$, as defined in \cite{Fio.CSOF2}. 
	
	\section{Facts about clones}
	
	In this paper we want to study sets of finitary functions from $\prod_{i=1}^m\ZZ_{p_i}$ to itself. The sets of functions that we want to study are the clones containing $\Clo(\ZZ_{s},+)$, where $s$ is a product of distinct primes. 
	
	Furthermore, let $n \in \NN$. We denote by  $\mathcal{L}(\ZZ_{n},+)$ the lattice of all clones containing $\Clo( \ZZ_{n},+)$.
	
	In \cite{May.PCOS} we can find a description for polynomial clones (clones containing all constants) which contain $\Clo(\ZZ_{s},+)$, where $s$ is a product of distinct primes. With a different strategy we will show a characterization that extends the finiteness result in \cite{May.PCOS} to those clones of finite abelian groups that do not necessarily contain all constants.
	
	Let us now show some basic facts about finitary functions from $\ZZ_{n}$ to $\ZZ_{n}$. 
	
	\begin{Rem}\label{RemPolCom}
		\theoremstyle{definition}
		It is a well-known fact that every finite field is polynomially complete. Thus for all $f\colon \FF_p^n \rightarrow \FF_p$, there exists a sequence $\{a_{\mathbfsl{m}}\}_{\mathbfsl{m} \in [p-1]_0^n} \subseteq \FF_p^n$ such that for all $\mathbfsl{x} \in \FF_p^n$, $f$ satisfies:
		\begin{equation*}
			f(\mathbfsl{x}) = \sum_{\mathbfsl{m} \in [p-1]_0^n}a_{\mathbfsl{m}}\mathbfsl{x}^{\mathbfsl{m}}.
		\end{equation*}
	\end{Rem}
	
	We can observe that if $p_1,\dots,p_m$ are distinct prime numbers we can split a function $f\colon \prod_{i=1}^m\ZZ_{p_i}^n \rightarrow \prod_{i=1}^m\ZZ_{p_i}$ in $f = \sum_{i=1}^m f_i$, where $f_i = \prod_{j\in [m]\backslash \{i\}}p_j^{p_i-1}f$. This implies, for example, that we can prove the following remark.
	
	\begin{Rem}\label{RemLinComb}
		
		Let $p_1 \cdots p_m =s$ be a product of distinct prime numbers and let $C$ be a clone containing $\Clo( \ZZ_{s}, +)$. Then for all $k \in \NN$ and $(\mathbfsl{a}_1,\dots,\mathbfsl{a}_m) \in \prod_{i=1}^m\ZZ_{p_i}^k$, $h_{(\mathbfsl{a}_1,\dots,\mathbfsl{a}_m)}\colon  \prod_{i=1}^m\ZZ_{p_i}^k \rightarrow  \prod_{i=1}^m\ZZ_{p_i}$ defined by:
		
		\begin{equation*}
			h_{(\mathbfsl{a}_1,\dots,\mathbfsl{a}_m)}\colon (\mathbfsl{x}_1,\dots,\mathbfsl{x}_m) \mapsto (\langle\mathbfsl{a}_1,\mathbfsl{x}_1\rangle,\dots, \langle \mathbfsl{a}_m,\mathbfsl{x}_m\rangle)
		\end{equation*}
		is in $C$.
		
	\end{Rem}
	
	Let $A$ be a set and let $\FF_p$ be a field of order $p$. With the following lemma we show that every function from $\FF_p^n \times A^s$ to $\FF_p$ can be seen as the induced function of a polynomial of $\mathbf{R}[x_1,\dots,x_n]$, where $\mathbf{R} = \FF_p^{A^s}$. This easy fact will be often used later. 
	
	\begin{Lem}\label{Lem2Genexprofa}
		Let $A$ be a set and let $\FF_p$ be a field of order $p$. Then for every function $f$ from $\FF_p^n \times A^s$ to $\FF_p$ there exists a sequence of functions $\{f_{\mathbfsl{m}}\}_{\mathbfsl{m} \in [p-1]_0^n}$ from $A^s$ to $\FF_p$ such that $f$ satisfies for all $\mathbfsl{x} \in \FF_p^n$, $\mathbfsl{y} \in A^s$:
		
		\begin{equation*}
			f(\mathbfsl{x}, \mathbfsl{y}) = \sum_{\mathbfsl{m} \in [p-1]_0^n} f_{\mathbfsl{m}}(\mathbfsl{y})\mathbfsl{x}^{\mathbfsl{m}}.
		\end{equation*}

	\end{Lem}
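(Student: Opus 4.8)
The goal is to prove Lemma \ref{Lem2Genexprofa}: every function $f\colon \FF_p^n \times A^s \to \FF_p$ can be written as $f(\mathbfsl{x},\mathbfsl{y}) = \sum_{\mathbfsl{m} \in [p-1]_0^n} f_{\mathbfsl{m}}(\mathbfsl{y})\mathbfsl{x}^{\mathbfsl{m}}$ for suitable functions $f_{\mathbfsl{m}}\colon A^s \to \FF_p$.

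The plan is to fix the second argument and apply polynomial completeness of the finite field $\FF_p$ (Remark \ref{RemPolCom}) in the first argument, pointwise in $\mathbfsl{y}$. For each fixed $\mathbfsl{y} \in A^s$, the map $\mathbfsl{x} \mapsto f(\mathbfsl{x},\mathbfsl{y})$ is a function from $\FF_p^n$ to $\FF_p$, so by Remark \ref{RemPolCom} there is a family of coefficients $\{a_{\mathbfsl{m}}^{(\mathbfsl{y})}\}_{\mathbfsl{m} \in [p-1]_0^n}$ in $\FF_p$ with $f(\mathbfsl{x},\mathbfsl{y}) = \sum_{\mathbfsl{m}} a_{\mathbfsl{m}}^{(\mathbfsl{y})}\mathbfsl{x}^{\mathbfsl{m}}$ for all $\mathbfsl{x}$. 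Then define $f_{\mathbfsl{m}}\colon A^s \to \FF_p$ by $f_{\mathbfsl{m}}(\mathbfsl{y}) := a_{\mathbfsl{m}}^{(\mathbfsl{y})}$, which is well defined as a function of $\mathbfsl{y}$ once we know the coefficients are uniquely determined by $\mathbfsl{y}$.

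The one point that needs care is that the representation in Remark \ref{RemPolCom} be canonical, so that the assignment $\mathbfsl{y} \mapsto (a_{\mathbfsl{m}}^{(\mathbfsl{y})})_{\mathbfsl{m}}$ is a genuine function and the $f_{\mathbfsl{m}}$ are well defined. Since each variable $x_i$ ranges over $\FF_p$ and $x_i^{p} = x_i$, every polynomial function is uniquely represented in the reduced form with exponents in $[p-1]_0$: there are exactly $p^n$ monomials $\mathbfsl{x}^{\mathbfsl{m}}$ with $\mathbfsl{m} \in [p-1]_0^n$, they span the $p^{\,p^n}$-dimensional space of all functions $\FF_p^n \to \FF_p$, and a dimension count shows they form a basis. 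Hence the coefficients $a_{\mathbfsl{m}}^{(\mathbfsl{y})}$ are uniquely determined by the function $\mathbfsl{x} \mapsto f(\mathbfsl{x},\mathbfsl{y})$, and so the maps $f_{\mathbfsl{m}}$ are unambiguously defined. This uniqueness is the main (and only mildly subtle) obstacle; once it is in hand, substituting the definition of $f_{\mathbfsl{m}}$ back gives the desired identity for all $\mathbfsl{x} \in \FF_p^n$ and $\mathbfsl{y} \in A^s$ simultaneously, completing the proof.

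Alternatively, one can phrase the same argument algebraically in terms of the ring $\mathbf{R} = \FF_p^{A^s}$ as suggested in the text: regard $f$ as a map $\FF_p^n \to \mathbf{R}$, $\mathbfsl{x} \mapsto (\mathbfsl{y} \mapsto f(\mathbfsl{x},\mathbfsl{y}))$, and interpolate it by an element of $\mathbf{R}[X_1,\dots,X_n]$ whose coefficients are the $f_{\mathbfsl{m}}$; the reduced-exponent monomials again give a unique such representative because $\mathbf{R}$, being a finite product of copies of $\FF_p$, satisfies $r^p = r$ componentwise so the interpolation can be carried out coordinatewise in $\mathbf{R}$.
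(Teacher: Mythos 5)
The paper states this lemma without proof (it is introduced as an ``easy fact'' and immediately used), so there is no official argument to compare against; your proof supplies exactly the standard argument one would expect, and it is correct in substance. Fixing $\mathbfsl{y}$, invoking Remark \ref{RemPolCom} to get coefficients $a_{\mathbfsl{m}}^{(\mathbfsl{y})}$, and then defining $f_{\mathbfsl{m}}(\mathbfsl{y}) := a_{\mathbfsl{m}}^{(\mathbfsl{y})}$ is the right move, and you are also right that the only point requiring any care is that this assignment be a well-defined function of $\mathbfsl{y}$, which uniqueness of the reduced representation guarantees (without uniqueness one would need a choice function over $A^s$, which may be infinite). One slip in your write-up: the space of all functions $\FF_p^n \to \FF_p$ has $p^{p^n}$ \emph{elements} but dimension $p^n$ over $\FF_p$; as literally written, $p^n$ monomials could not be a basis of a $p^{p^n}$-dimensional space. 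The correct count is that the $p^n$ reduced monomials span a space of dimension $p^n$ (spanning follows from Remark \ref{RemPolCom}), hence form a basis, hence the coefficients are unique. With that correction the argument is complete, and your closing remark that the same interpolation can be done coordinatewise in $\mathbf{R} = \FF_p^{A^s}$ matches how the paper intends to use the lemma.
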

	
	The previous lemma in our setting implies the following.
	
	\begin{Lem}\label{Lem2Genexprofb}
		Let $p_1,\dots,p_m$ be distinct prime numbers. Then for every function $f$ from $\prod_{i=1}^m\ZZ_{p_{i}}^n$ to $\prod_{i=1}^m\ZZ_{p_i}$ there exist $m$ sequences of functions $\{f_{(i,{\mathbfsl{h}_i})}\}_{\mathbfsl{h}_i \in [p_i-1]_0^n}$ from $\prod_{j \in [m]\backslash \{i\}}\ZZ_{p_j}^n$ to $\ZZ_{p_i}$, for all $i \in [m]$, such that $f$ satisfies for all $(\mathbfsl{x}_1,\dots,\mathbfsl{x}_m) \in \prod_{i=1}^m\ZZ_{p_{i}}^n$:
		
		\begin{align*}
			\label{GeneralExproffb}
			f(\mathbfsl{x}_1,\dots,\mathbfsl{x}_m) &= (\sum_{\mathbfsl{h}_1 \in [p_1-1]_0^n} f_{(1,{\mathbfsl{h}_1})}(\mathbfsl{x}_2,\dots,\mathbfsl{x}_m)\mathbfsl{x}_1^{\mathbfsl{h}_1}, \dots,
			\\ &\sum_{\mathbfsl{h}_m \in [p_m-1]_0^n} f_{(m,{\mathbfsl{h}_m})}(\mathbfsl{x}_1,\dots,\mathbfsl{x}_{m-1})\mathbfsl{x}_m^{\mathbfsl{h}_m}).
		\end{align*} 
	\end{Lem}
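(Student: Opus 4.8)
The plan is to reduce the statement to $m$ separate applications of Lemma~\ref{Lem2Genexprofa}, one for each coordinate of the codomain. Since $\prod_{i=1}^m \ZZ_{p_i}$ is a direct product, a function $f$ into it is determined by its coordinate functions $f^{(i)} := \pi_i \circ f$, where $\pi_i \colon \prod_{j=1}^m \ZZ_{p_j} \to \ZZ_{p_i}$ is the projection onto the $i$-th factor; thus $f = (f^{(1)}, \dots, f^{(m)})$, and it suffices to expand each $f^{(i)}$ in the asserted form.

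First I would fix $i \in [m]$ and note that, since $p_i$ is prime, $\ZZ_{p_i} = \FF_{p_i}$ is a field of order $p_i$. I would then view the coordinate function $f^{(i)} \colon \prod_{j=1}^m \ZZ_{p_j}^n \to \FF_{p_i}$ as a function of two arguments: the block $\mathbfsl{x}_i \in \FF_{p_i}^n$, and the tuple of all remaining blocks $(\mathbfsl{x}_1, \dots, \mathbfsl{x}_{i-1}, \mathbfsl{x}_{i+1}, \dots, \mathbfsl{x}_m)$, which ranges over the set $A_i := \prod_{j \in [m] \setminus \{i\}} \ZZ_{p_j}^n$. In this way $f^{(i)}$ becomes a function from $\FF_{p_i}^n \times A_i$ to $\FF_{p_i}$, which is exactly the setting of Lemma~\ref{Lem2Genexprofa} (with the auxiliary set there taken to be $A_i$).

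Applying Lemma~\ref{Lem2Genexprofa} to $f^{(i)}$ then produces a sequence of functions $\{f_{(i,\mathbfsl{h}_i)}\}_{\mathbfsl{h}_i \in [p_i-1]_0^n}$ from $A_i = \prod_{j \in [m] \setminus \{i\}} \ZZ_{p_j}^n$ to $\FF_{p_i} = \ZZ_{p_i}$ satisfying
\begin{equation*}
f^{(i)}(\mathbfsl{x}_1, \dots, \mathbfsl{x}_m) = \sum_{\mathbfsl{h}_i \in [p_i-1]_0^n} f_{(i,\mathbfsl{h}_i)}(\mathbfsl{x}_1, \dots, \mathbfsl{x}_{i-1}, \mathbfsl{x}_{i+1}, \dots, \mathbfsl{x}_m)\, \mathbfsl{x}_i^{\mathbfsl{h}_i}.
\end{equation*}
Collecting these $m$ expansions as the coordinates of $f = (f^{(1)}, \dots, f^{(m)})$ yields exactly the displayed formula in the statement.

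I do not expect a genuine obstacle here: the result is a direct consequence of Lemma~\ref{Lem2Genexprofa} together with the product structure of the codomain. The only point demanding care is the bookkeeping — recognizing that each $\ZZ_{p_i}$ is a field (so that the monomial expansion applies), and that the remaining coordinate blocks can be bundled into a single parameter set $A_i$ so that Lemma~\ref{Lem2Genexprofa} applies verbatim to every coordinate function.
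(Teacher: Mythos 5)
Your proposal is correct and matches the paper's intent exactly: the paper gives no explicit proof, stating only that Lemma~\ref{Lem2Genexprofa} implies this result, and your coordinate-wise decomposition with the remaining blocks bundled into the parameter set $A_i$ is precisely the routine argument being left implicit.
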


	\section{Embedding of the Clonoids}

	The aim of this section is to prove that for all $i \in [m]$ there exists an embedding of the lattice of all $(\mathbb{Z}_{p_i},\FF_i)$-linearly closed clonoids in the lattice of all clones containing $\Clo( \prod_{i \in [m]}\ZZ_{p_i},+)$, where $p_1,\dots,p_m$ are prime numbers and $\FF_i = \prod_{j \in [m]\backslash \{i\}} \ZZ_{p_j}$. This clearly provides a lower bound for the cardinality of the lattice of all clones containing $\Clo( \ZZ_{n},+)$ when $n$ is squarefree.
	
	For all $f \in \ZZ_{p_1}^{\FF_1^n}$ we define $e(f): \prod_{j=1}^m\ZZ_{p_j}^n \rightarrow \prod_{j=1}^m\ZZ_{p_j}$ by: 
	
	\begin{align*}
		&e(f) \colon  (\mathbfsl{x}_1,\dots,\mathbfsl{x}_m) \mapsto
 (f(\mathbfsl{x}_2,\dots,\mathbfsl{x}_m),0_{\ZZ_{p_{2}}},\dots,0_{\ZZ_{p_m}})
	\end{align*}
	for all $(\mathbfsl{x}_1, \dots,\mathbfsl{x}_m) \in \prod_{j=1}^m\ZZ_{p_j}^n$.
	
	Furthermore, we define $\gamma$ from the lattice of all $(\mathbb{Z}_{p_1},\FF_1)$-linearly closed clonoids to the lattice of all clones containing $\Clo( \prod_{i \in [m] }\ZZ_{p_i},+)$ such that for all $C \in \mathcal{L}(\mathbb{Z}_{p_1},\FF_1)$:
	
	\begin{equation}\label{equ:1-2}	
		\begin{split}
			\gamma(C) := \bigcup_{n \in \NN}\{e(g)+ h_{(\mathbfsl{a}_1,\dots,\mathbfsl{a}_m)} \mid g \in C^{[n]}, (\mathbfsl{a}_1,\dots,\mathbfsl{a}_m) \in \prod_{j=1}^m\ZZ_{p_j}^n\}
		\end{split}
	\end{equation}
	where $h_{(\mathbfsl{a}_1,\dots,\mathbfsl{a}_m)}$ is defined in Remark \ref{RemLinComb}.
	
	In order to prove Theorem \ref{ThEmbClonoids-2} we first present an easy lemma omitting the proof.
	
	\begin{Lem}
		\label{Lemclogen-2}
		Let $\FF = \prod_{i=1}^s \FF_{p_i}$ and $\KK= \prod_{i=1}^m \FF_{q_i}$ be products of finite fields. Let $X \subseteq \bigcup_{n \in \NN} \mathbb{F}^{\mathbb{K}^n}$. Then $\Cid(X) = \bigcup_{n \in \NN} X_n$ where:
		\begin{flushleft}
			$X_0 := X$
			\\$X_{n+1} := \{\mathbfsl{a}f + \mathbfsl{b}g \mid \mathbfsl{a},\mathbfsl{b} \in \FF, f,g \in X_n^{[r]}, r \in \NN\} \cup \{g: (\mathbfsl{y}_1,\dots,\mathbfsl{y}_m) \mapsto f(A_1\cdot \mathbfsl{y}_1^t,\cdots,A_m\cdot \mathbfsl{y}_m^t) \mid f \in X_n^{[k]}, A_i \in \mathbb{F}^{k \times l}_{q_i}\}$.
		\end{flushleft}
	\end{Lem}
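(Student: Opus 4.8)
The plan is to prove the two inclusions $\bigcup_{n} X_n \subseteq \Cid(X)$ and $\Cid(X) \subseteq \bigcup_{n} X_n$ separately, using that $\Cid(X)$ is by definition the intersection of all $(\FF,\KK)$-linearly closed clonoids containing $X$, hence the \emph{smallest} such clonoid. I will assume $X$ non-empty, so that $\Cid(X)$ is a well-defined clonoid. The one observation the whole argument rests on is that the chain $(X_n)_{n\in\NN}$ is increasing, i.e. $X_n \subseteq X_{n+1}$: given $f \in X_n^{[r]}$, applying the first generating rule with $\mathbfsl{a} = \mathbf{1}_{\FF}$ (the Hadamard identity), $\mathbfsl{b}=\mathbf{0}$ and $g=f$ yields $\mathbfsl{a}f + \mathbfsl{b}g = f$, so $f \in X_{n+1}$ (alternatively, apply the second rule with $l=k$ and each $A_i$ the identity matrix). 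This monotonicity is precisely what lets two functions living at different levels be brought into a common $X_N$.

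For $\bigcup_{n} X_n \subseteq \Cid(X)$ I would argue by induction on $n$ that $X_n \subseteq \Cid(X)$. The base case $X_0 = X \subseteq \Cid(X)$ holds since $\Cid(X)$ contains $X$. For the step, assume $X_n \subseteq \Cid(X)$. Every element of $X_{n+1}$ is obtained from elements of $X_n$ by one application of either the Hadamard-combination rule or the matrix-substitution rule; these are exactly properties (1) and (2) of Definition \ref{DefClo-2}, under which $\Cid(X)$ is closed. Hence $X_{n+1} \subseteq \Cid(X)$, which finishes the induction and this inclusion.

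For the reverse inclusion I would show that $U := \bigcup_{n} X_n$ is itself an $(\FF,\KK)$-linearly closed clonoid containing $X$; minimality of $\Cid(X)$ then yields $\Cid(X) \subseteq U$. Clearly $U$ contains $X = X_0$ and is non-empty. To verify property (1), take $f,g \in U^{[r]}$ and $\mathbfsl{a},\mathbfsl{b}\in\FF$; then $f \in X_{n_1}$, $g \in X_{n_2}$ for some $n_1,n_2$, and by monotonicity both lie in $X_N^{[r]}$ with $N=\max(n_1,n_2)$, so $\mathbfsl{a}f+\mathbfsl{b}g \in X_{N+1}\subseteq U$. Property (2) is checked the same way: a single $f \in U$ lies in some $X_N$, and the matrix substitution places the resulting function in $X_{N+1}\subseteq U$.

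The only genuine subtlety, and the point I expect to demand the most care, is exactly this interplay of monotonicity and arity bookkeeping: the combination rule requires two functions of equal arity while the substitution rule alters arity, so one must confirm that applying the rules one level at a time never traps a function at a stage from which a needed partner is unreachable. Because the chain is increasing, any finite sequence of such operations is realized at some finite stage $X_n$ and is absorbed by the union; beyond this, the verification is a direct unwinding of Definition \ref{DefClo-2}, which is why the statement is labelled an easy lemma.
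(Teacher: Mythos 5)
Your proof is correct: the inclusion $\bigcup_{n}X_n \subseteq \Cid(X)$ by induction on $n$, the reverse inclusion by verifying that the union is itself an $(\FF,\KK)$-linearly closed clonoid containing $X$ (using the monotonicity $X_n \subseteq X_{n+1}$, which you rightly identify as the point needing justification, since the levels must be merged to apply rule (1) to functions of equal arity), and minimality of $\Cid(X)$. The paper states this as an easy lemma and omits its proof entirely, so there is nothing to compare against; your argument is the standard one the author evidently had in mind, and your remark that $X$ must be assumed non-empty is a legitimate (if minor) sharpening of the statement as printed.
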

	
	We omit the straightforward proof of this Lemma which allows us to prove Theorem \ref{ThEmbClonoids-2}.
	
	\begin{proof}[Proof of Theorem \ref{ThEmbClonoids-2}]

		Let $\gamma$ be the function defined in \eqref{equ:1-2}. First we show that $\gamma$ is well-defined and then we show that $\gamma$ is injective and that $\gamma$ is a homomorphism. Let $C$ be a $(\ZZ_{p_1},\FF_1)$-linearly closed clonoid. Clearly $\gamma(C)$ contains the projections and the binary addition on $\prod_{i=1}^m\ZZ_{p_i}$. Moreover, let $f,f_1,\dots,f_n \in \gamma(C)$ be an $n$-ary and $n$ $s$-ary functions respectively. Then there exist $g_f,g_1,\dots,g_n \in C$, $(\mathbfsl{a}_1,\dots,\mathbfsl{a}_m)\in \prod_{i=1}^m\ZZ_{p_i}^n$, and $(\mathbfsl{a}_{(1,j)},\dots,\mathbfsl{a}_{(m,j)}) \in \prod_{i=1}^m\ZZ_{p_i}^s$, for all $j \in [n]$, such that:
		
		\begin{equation*}
			f(\mathbfsl{x}_1,\dots,\mathbfsl{x}_m) = (\langle \mathbfsl{a}_1,\mathbfsl{x}_1 \rangle + g_f(\mathbfsl{x}_2,\dots,\mathbfsl{x}_m), \langle \mathbfsl{a}_2,\mathbfsl{x}_2 \rangle, \dots,\langle \mathbfsl{a}_m,\mathbfsl{x}_m \rangle ),
		\end{equation*}	
		for all $(\mathbfsl{x}_1,\dots,\mathbfsl{x}_m)\in \prod_{i=1}^m\ZZ_{p_i}^n$ and for all $1 \leq j \leq n$:
		
		\begin{equation*}
			f_j(\mathbfsl{y}_1,\dots,\mathbfsl{y}_m) = (\langle \mathbfsl{a}_{(1,j)},\mathbfsl{y}_1 \rangle + g_j(\mathbfsl{y}_2,\dots,\mathbfsl{y}_m), \langle \mathbfsl{a}_{(2,j)},\mathbfsl{y}_2 \rangle, \dots,\langle \mathbfsl{a}_{(m,j)},\mathbfsl{y}_m \rangle)
		\end{equation*}
		for all $(\mathbfsl{y}_1,\dots,\mathbfsl{y}_m)\in \prod_{i=1}^m\ZZ_{p_i}^s$. Then $h = f \circ (f_1,\dots,f_n)$ can be written as:
		
		\begin{equation*}
			h(\mathbfsl{y}_1,\dots,\mathbfsl{y}_m) = (\langle \mathbfsl{c}_1,\mathbfsl{y}_1 \rangle + g_h(\mathbfsl{y}_2,\dots,\mathbfsl{y}_m), \langle \mathbfsl{c}_2,\mathbfsl{y}_2 \rangle, \dots,\langle \mathbfsl{c}_m,\mathbfsl{y}_m \rangle ),
		\end{equation*}	
		where for all $u\in [m]$, $j \in [s]$, $(\mathbfsl{c}_u)_j = \sum_{i =1}^n(\mathbfsl{a}_u)_i(\mathbfsl{a}_{(u,i)})_j$ and $g_h\colon \prod_{i =2}^m\ZZ_{p_i}^s$ $ \rightarrow \ZZ_{p_1}$ is defined by:
		
		\begin{equation*}
			\begin{split}
				g_h(\mathbfsl{y}_2,\dots,\mathbfsl{y}_m) =& \langle \mathbfsl{a}_1,\mathbf{d}(\mathbfsl{y}_2,\dots,\mathbfsl{y}_m) \rangle + g_f(\langle \mathbfsl{a}_{(2,1)},\mathbfsl{y}_2 \rangle, \dots, \langle \mathbfsl{a}_{(2,n)},\mathbfsl{y}_2 \rangle\\&, \dots,\langle \mathbfsl{a}_{(m,1)}, \mathbfsl{y}_m \rangle, \dots, \langle \mathbfsl{a}_{(m,n)},\mathbfsl{y}_m \rangle),
			\end{split}
		\end{equation*}
		with $\mathbf{d}(\mathbfsl{y}_2,\dots,\mathbfsl{y}_m) =  (g_1(\mathbfsl{y}_2,\dots,\mathbfsl{y}_m),\dots,g_n(\mathbfsl{y}_2,\dots,\mathbfsl{y}_m))$ for all $(\mathbfsl{y}_2,$ $\dots,\mathbfsl{y}_m) \in \prod_{i =2}^m\ZZ_{p_i}^s$. We can see from Definition \ref{DefClo-2} that $g_h \in C$. Thus $\gamma(C)$ is closed under composition and $\gamma$ is well-defined. 
		
		Next we prove that $\gamma$ is injective. Let $C$ and $D$ be two $(\ZZ_{p_1},\FF_1)$-linearly closed clonoids such that $\gamma(C) = \gamma(D)$ and let $g \in C$ be an $l$-ary function. Then let $s\colon \prod_{i=1}^m\ZZ_{p_i}^l \rightarrow \prod_{i=1}^m\ZZ_{p_i}$ be such that $e(g) =s$. Then $s$ is in $\gamma(C) = \gamma(D)$. By definition of $\gamma$, this implies that $e(g) = e(g') + h_{(\mathbfsl{a}_1,\dots,\mathbfsl{a}_m)}$ for some $g' \in D$ and $(\mathbfsl{a}_1,\dots,\mathbfsl{a}_m) \in \prod_{i=1}^m\ZZ_{p_i}^l $. The only possibility is that  $g = g' \in D$ and thus $C \subseteq D$. We can repeat this argument for the other inclusion and hence $\gamma$ is injective. Furthermore, we have that for all $C,D \in \mathcal{L}(\ZZ_{p_1},\FF_1)$, $\gamma(C \cap D) = \gamma(C) \cap \gamma(D)$. We can observe that $\gamma$ is monotone, thus $\gamma(C \vee D) \supseteq \gamma(C) \vee \gamma(D)$. For the other inclusion we prove by induction on $n$ that $\gamma(C) \vee \gamma(D) \supseteq e(X_n)$, where $C \vee D = \bigcup_{n \in \NN} X_n$ with:
		\begin{flushleft}
			$X_0 = C \cup D$
			\\$X_{n+1} = \{af + bg \mid a,b \in \ZZ_{p_1}, f,g \in X_n^{[r]}, r \in \NN\} \cup \{g\colon (\mathbfsl{y}_2,\dots,\mathbfsl{y}_m) \mapsto f(A_2\cdot \mathbfsl{y}_2^t,\cdots,A_m\cdot \mathbfsl{y}_m^t) \mid f \in X_n^{[k]}, A_i \in \mathbb{Z}^{k \times l}_{p_i}, k,l \in \NN \}$.
		\end{flushleft}
		Base step $n = 0$: $e(C \cup D) = e(C) \cup e(D) \subseteq \gamma(C) \vee \gamma(D)$.
		
		Induction step $n >0$: suppose that the claim holds for $n-1$. Then let $g \in e(X_{n})$. Thus there exists $u \in X_n$ such that $e(u)=g$ and either $u$ is a linear combination of functions in $X_{n-1}$ or there exist  $f \in X_n^{[k]}, A_i \in \mathbb{Z}^{k \times l}_{p_i}$ for all $i \in [m]\backslash\{1\}$, and $k,l \in \NN$ such that $u \colon  (\mathbfsl{y}_2,\dots,\mathbfsl{y}_m) \mapsto f(A_2\cdot \mathbfsl{y}_2^t,\cdots,A_m\cdot \mathbfsl{y}_m^t)$. In both cases we have $g \in \Clg(e(X_{n-1}) \cup \bigcup_{t\in \NN} \{h_{(\mathbfsl{a}_1,\dots,\mathbfsl{a}_m)} \mid (\mathbfsl{a}_1,\dots,\mathbfsl{a}_m) \in \prod_{i \in [m]} \ZZ_{p_i}^t\}) \subseteq \gamma(C) \vee \gamma(D)$ and this concludes the induction proof. By Lemma \ref{Lemclogen-2}, $\gamma(C) \vee \gamma(D) \supseteq e(C \vee D)$. 
		
		We can observe that $\gamma(C\vee D)$ is the clone generated by $e(C \vee D)$, $+$ and all the mappings $h_{(\mathbfsl{a}_1,\dots,\mathbfsl{a}_m)}$ defined in Remark \ref{RemLinComb}. Since $e(C \vee D) \subseteq \gamma(C) \vee \gamma(D)$, it follows that $\gamma(C\vee D) \subseteq \gamma(C) \vee \gamma(D)$ . Hence $\gamma$ is an embedding.
	\end{proof}
	
	\section{A general bound}
	\label{SAGB}
	
	In the current section our goal is to determine a bound for the cardinality of the lattice of all clones containing $\Clo( \ZZ_{s},+)$, where $s =p_1\cdots p_m$ is a product of distinct primes. Theorem \ref{Cor3-2} gives a complete list of generators for a clone containing $\Clo(\ZZ_{s},+)$ that explains the connection between clonoids and clones in this case. The generators of Theorem \ref{Cor3-2} are substantially formed by a product of a unary member of a generating set of a $(\ZZ_{p_i},\prod_{j\in [m]\backslash \{i\}}\ZZ_{p_j})$-linearly closed clonoid and a monomial generating a clone on $\ZZ_{p_i}$ for $i \in [m]$. This puts together the characterization in \cite{Kre.CFSO}and \cite[Theorem $1.2$]{Fio.CSOF2} which are the main ingredients of this section.
	
	We start showing some lemmata which we need to prove that clones containing $\Clo( \ZZ_{s},+)$ are strictly characterized by the $(\ZZ_{p_i},\FF_i)$-linearly closed clonoids, where we denote by $\FF_i$ the product $\prod_{j \in [m]\backslash \{i\}}\ZZ_{p_j}$.
	
	In this section we have to deal with polynomials whose coefficients are finitary functions from $\FF_i$ to $\ZZ_{p_i}$. The next step will be to generalize some results in \cite{Kre.CFSO} about $p$-linearly closed clonoids to polynomials in a polynomial ring over a set of countably many variables. Let us start with the notation. Let $\mathbf{R}$ be a ring. We fix an alphabet $X := \{x_i \mid i \in \NN\}$ and we denote by $\mathbf{R}[X]$ the polynomial ring over $\mathbf{R}$ in the variables $X$.
	
	Following \cite{Kre.CFSO} we denote by $\tD(h)$ the \emph{total degree of a monomial} $h$, which is defined as the sum of the exponents. We also denote by $\tD(f) := \max(\{d \mid d = \tD(h),  h \text { is a monomial in }f\})$ the maximum of the total degrees of monomials in $f$. Let $f \in \mathbf{R}[x_1,\dots,x_k]$  and let $\mathbfsl{x} = (x_1,\dots,x_k)$. Then $f$ can be written as:
	
	\begin{equation*}
		f =\sum_{\mathbfsl{m} \in \NN_0^k}r_{\mathbfsl{m}}\mathbfsl{x}^{\mathbfsl{m}},
	\end{equation*}
	for some sequence $\{r_{\mathbfsl{m}}\}_{\mathbfsl{m} \in \NN_0^k}$ in $\mathbf{R}$ with only finitely many non-zero members and where $\mathbfsl{x}^{\mathbfsl{m}}=\prod_{i=1}^nx_i^{m_i}$.
	
	Next we introduce a notation for the composition of multivariate polynomials. Let $l,h \in \NN$, $g,f_1,\dots,f_h \in \mathbf{R}[x_1,\dots,x_l]$, and let $\mathbfsl{b} = (b_1, \dots, b_h ) \in \NN^h$ with $1 \leq b_1 < b_2 < \cdots < b_h \leq l$. Then we define $g \circ_{\mathbfsl{b}} (f_1 ,\dots, f_h)$ by:
	
	\begin{equation*}
		g \circ_{\mathbfsl{b}} (f_1,\dots,f_h) := g(x_1,\dots, x_{b_1-1}, f_1, x_{b_1+1},\dots,x_{b_2-1}, f_2, x_{b_2+1},\dots ).
	\end{equation*}
	Let $\mathbf{R}[X]$ be a polynomial ring and let $f \in \mathbf{R}[X]$. Since later we want to introduce the induced function of a polynomial, in order to have a unique polynomial for every induced function, we consider the ideal $I$ generated by the polynomials $x_i^p-x_i$ in $\mathbf{R}[X]$, for every $x_i \in X$. By \cite[Chapter $15.3$]{Eis.CA} there is a unique remainder $\rem(f)$ of $f$ with respect to $I$. This remainder has the property that the exponents of the variables are less or equal $p-1$. Following \cite[Section $2$]{Kre.CFSO}, we define
	
	\begin{equation*}
		\mathbf{R}[X]_p := \{ \sum_{\mathbfsl{m} \in [p-1]_0^k}r_{\mathbfsl{m}}\mathbfsl{x}^{\mathbfsl{m}}\mid k \in \NN_0, r_{\mathbfsl{m}}\in R, \mathbfsl{x} = (x_1,\dots,x_k)\}
	\end{equation*} 
	We can observe that these polynomials form a set of representatives of the set of all classes of the quotient $\mathbf{R}/I$.
	
	With the next definition we want to introduce sets of polynomials in polynomial rings closed under composition from the right and from the left with linear mappings. 
	
	\begin{Def}
		\label{DefPolyClonoid}
		Let $A$ be a set and let $\mathbf{R}$ be a ring. Let $\mathbf{R}^A[X]$ be a polynomial ring. An $\mathbf{R}^A$-\emph{polynomial linearly closed clonoid} is a non-empty subset $C$ of $\mathbf{R}^A[X]$ with the following properties:
		
		\begin{enumerate}
			\item[(1)] for all $f \in C$, $g \in C$, and $a,b \in  \mathbf{R}$
			\begin{equation*}
				af+ bg \in C;
			\end{equation*}
			
			\item[(2)] for all $s \in \NN$, $f \in C \cap \mathbf{R}^A[x_1,\dots,x_s]$, and $M \in \mathbf{R}^{ s\times l}$:
			
			\begin{equation*}
				g = f(M \cdot (x_1,\dots,x_l)^t) \text{ is in } C.
			\end{equation*}		
		\end{enumerate}
		
	\end{Def}
	
	We can observe that item $(2)$ of Definition \ref{DefPolyClonoid} implies that for all $s,k \in \NN$, $l \leq s$, $f \in C \cap \mathbf{R}^A[x_1,\dots,x_s]$, and $\mathbfsl{a} \in \mathbf{R}^{ k}$:
	
	\begin{equation*}
		g = f \circ_{(l)}(\prod_{i\in [k]} a_ix_i) \text{ is in } C.
	\end{equation*}		
	Let $S \subseteq \mathbf{R}^A[X]$. Then we denote by \index{$\langle \rangle_{\mathbf{R}^A}$}$\langle S\rangle_{\mathbf{R}^A}$ the $\mathbf{R}^A$-\emph{polynomial linearly closed clonoid} generated by $S$. We can see that $\mathbf{R}^A[X]_p$ forms an $\mathbf{R}^A$-linearly closed clonoid.
	
	Let us now modify \cite[Lemmata $3.8$ and $3.9$]{Kre.CFSO} to deal with $\mathbf{R}^A$-polynomial linearly closed clonoids. Indeed \cite[Lemmata $3.8$]{Kre.CFSO} is stated for $\ZZ_p$-polynomial linearly closed clonoids and works for $\mathbf{R}^A$-polynomial linearly closed clonoids in general with essentially the same proof as in \cite{Kre.CFSO}.
	
	\begin{Lem}
		\label{Lempclonoids}
		Let $A$ be a set and let $\mathbf{R}$ be a ring. Let $d \in \NN$, let $r \in \mathbf{R}^A$ and let $g \in\mathbf{R}^A[X]$ with $\tD(g)$ $\leq d$, and the coefficient of $\mathbfsl{x}^{\mathbf{1}_d}$ in $g$ is $0$. Then $rx_1\cdots x_d\in \langle\{rx_1\cdots x_d + g\}\rangle_{\mathbf{R}^A}$.
	\end{Lem}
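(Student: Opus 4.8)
The plan is to realise the multilinear ``top-degree extraction'' operator by the closure operations available to an $\mathbf{R}^A$-polynomial linearly closed clonoid, exactly in the spirit of \cite[Lemma $3.8$]{Kre.CFSO}. Write $F := rx_1\cdots x_d + g$ for the single generator and $C := \langle\{F\}\rangle_{\mathbf{R}^A}$. For every subset $S \subseteq [d]$ let $\sigma_S$ denote the linear substitution that sends $x_i \mapsto 0$ for $i \in [d]\setminus S$ and fixes every other variable (including those of index $>d$). Since $\sigma_S$ is of the form $f \mapsto f(M\cdot(x_1,\dots,x_l)^t)$ for a suitable $0/1$ diagonal matrix $M$, item $(2)$ of Definition \ref{DefPolyClonoid} gives $\sigma_S(F) \in C$ for each $S$.

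Next I would form the alternating sum
\begin{equation*}
D := \sum_{S\subseteq[d]}(-1)^{d-|S|}\,\sigma_S(F).
\end{equation*}
Because $-1 \in \mathbf{R}$ and $C$ is closed under the binary operation $af+bg$ of item $(1)$, iterating item $(1)$ shows that any $\mathbf{R}$-linear combination of finitely many members of $C$ again lies in $C$; in particular $D \in C$.

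It remains to compute $D$. Writing $\mathrm{supp}(\mathbfsl{a}) = \{i \mid a_i > 0\}$, for a monomial $\mathbfsl{x}^{\mathbfsl{a}}$ one has $\sigma_S(\mathbfsl{x}^{\mathbfsl{a}}) = \mathbfsl{x}^{\mathbfsl{a}}$ when $\mathrm{supp}(\mathbfsl{a})\cap[d]\subseteq S$ and $\sigma_S(\mathbfsl{x}^{\mathbfsl{a}}) = 0$ otherwise, so the coefficient of $\mathbfsl{x}^{\mathbfsl{a}}$ in $D$ is its coefficient in $F$ multiplied by $\sum_{S:\,\mathrm{supp}(\mathbfsl{a})\cap[d]\subseteq S\subseteq[d]}(-1)^{d-|S|}$. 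A standard binomial computation shows this last sum equals $(1-1)^{d-t}$ with $t = |\mathrm{supp}(\mathbfsl{a})\cap[d]|$, hence it vanishes unless $[d]\subseteq \mathrm{supp}(\mathbfsl{a})$. The two hypotheses then pin down the survivor: $[d]\subseteq\mathrm{supp}(\mathbfsl{a})$ forces $\tD(\mathbfsl{x}^{\mathbfsl{a}})\geq d$, while $\tD(g)\leq d$ (and $\tD(rx_1\cdots x_d)=d$) forces equality, so $\mathbfsl{a}=\mathbf{1}_d$; and since the coefficient of $\mathbfsl{x}^{\mathbf{1}_d}$ in $g$ is $0$, the coefficient of $\mathbfsl{x}^{\mathbf{1}_d}$ in $F$ is exactly $r$. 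Therefore $D = r\,x_1\cdots x_d$, which completes the argument.

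The only real bookkeeping is the inclusion--exclusion over the $2^d$ subsets $S$, together with the care that each $\sigma_S$ acts only on $x_1,\dots,x_d$ and leaves the (possibly present) higher-index variables of $g$ untouched; this is precisely what makes $\mathrm{supp}(\mathbfsl{a})\cap[d]$, rather than $\mathrm{supp}(\mathbfsl{a})$, the relevant set in the vanishing computation. I expect this to be the main (though routine) obstacle, alongside verifying that the hypotheses $\tD(g)\le d$ and the vanishing of the $\mathbfsl{x}^{\mathbf{1}_d}$-coefficient of $g$ are each used exactly where claimed; no structure on $\mathbf{R}$ beyond the presence of $1$ and $-1$ is required.
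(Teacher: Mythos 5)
Your proof is correct, but it takes a genuinely different route from the paper. The paper proceeds by induction on the number of monomials of $g$: at each step it picks a monomial $m$ of $g$ and a variable $x_l\in\{x_1,\dots,x_d\}$ not occurring in $m$ (such a pair exists precisely because $\tD(g)\le d$ and the $\mathbfsl{x}^{\mathbf{1}_d}$-coefficient of $g$ vanishes), and replaces $g$ by $g-g\circ_{(l)}0$, which kills $m$ while preserving the hypotheses and the leading term $rx_1\cdots x_d$; after finitely many steps $g$ is gone. Your argument replaces this iteration by the single closed-form alternating sum $D=\sum_{S\subseteq[d]}(-1)^{d-|S|}\sigma_S(F)$ over all $2^d$ zero-substitutions, and then identifies the unique surviving monomial by the binomial identity $(1-1)^{d-t}=0$ for $t<d$. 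Both arguments use exactly the same closure operations (item $(2)$ for the $0$-substitutions, item $(1)$ for the signed combinations) and the same two hypotheses at the same logical points, so neither is more general; what your version buys is a one-shot formula with no induction and no need to first reduce to $g\in\mathbf{R}^A[x_1,\dots,x_d]$ (your observation that only $\mathrm{supp}(\mathbfsl{a})\cap[d]$ matters handles the higher-index variables), at the cost of an exponential-size linear combination where the paper's telescoping uses only as many steps as $g$ has monomials. The bookkeeping you flag is handled correctly: the survivor analysis $[d]\subseteq\mathrm{supp}(\mathbfsl{a})$ together with $\tD\le d$ does force $\mathbfsl{a}=\mathbf{1}_d$, and the coefficient there is $r$ by hypothesis.
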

	
	\begin{proof}
		Let $g \in \mathbf{R}^A[X]$ and let $C := \langle\{rx_1\cdots x_d + g\}\rangle_{\mathbf{R}^A}$. By setting all variables $x_i$ with $i > d$ to $0$, we may assume that $g \in \mathbf{R}^A[x_1 , \dots, x_d]$.
		
		Next we proceed by induction on the number of monomials of $g$ in order to show that $rx_1\cdots x_d \in \langle \{rx_1\cdots x_d + g\}\rangle_{\mathbf{R}^A} \subseteq C$. 
		
		If $g=0$ then the claim obviously holds. Let us suppose that $rx_1\cdots x_d\in \langle\{rx_1\cdots x_d + s\}\rangle_{\mathbf{R}^A}$ for every $s$ with $t \geq 0$ monomials. Let the number of monomials of $g$ be $t+1$. We observe that there exist $x_l \in \{x_1,\dots,x_d\}$ and a monomial $m$ of $g$ such that $x_l$ does not appear in $m$. Thus we obtain:
		\begin{equation*}
			rx_1\cdots x_d + g - (rx_1\cdots x_d + g) \circ_{(l)} 0 = rx_1\cdots x_d + g - g \circ_{(l)} 0\in C.
		\end{equation*}
		Thus $g' := g - g \circ_{(l)} 0$ satisfies the properties that $\tD(g') \leq d$, the coefficient of $\mathbfsl{x}^{\mathbf{1}_d}$ in $g'$ is $0$, $g'\in \mathbf{R}^A[x_1 ,\dots, x_d ]$ and $g'$ has fewer monomials than $g$, since the monomial $m$ is cancelled in $g - g \circ_{(l)} 0$. By the induction hypothesis $rx_1\cdots x_d\in \langle\{rx_1\cdots x_d + g'\}\rangle_{\mathbf{R}^A} \subseteq \langle\{rx_1\cdots x_d + g\}\rangle_{\mathbf{R}^A}$ and the claim holds.
	\end{proof}

	With Lemma \ref{Lempclonoids} we can now prove the following generalization of \cite[Lemma $3.9$]{Kre.CFSO} with the same proof. The following Lemma generalizes \cite[Lemma $3.9$]{Kre.CFSO}, which is stated for $\ZZ_p$-polynomial linearly closed clonoids, to $\ZZ_{p}^n$-polynomial linearly closed clonoids.
	
	\begin{Lem}
		\label{LemFoundPcloni}
		Let $d,n \in \NN$, let $p$ be a prime and let $f$ be a polynomial in $\ZZ_{p}^n[X]_p$ with $d := \tD(f)$. Let $m$ be a monomial with coefficient $r \in \ZZ_{p}^n$ and $\tD(m) = d$. Then:
		\begin{equation*}
			rx_1 \dots x_d \in \langle\{f\}\rangle_{\ZZ_{p}^n}.
		\end{equation*}
	\end{Lem}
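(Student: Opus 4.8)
The plan is to reduce the statement to Lemma~\ref{Lempclonoids} by producing, through the clonoid operations applied to $f$, a polynomial of the form $rx_1\cdots x_d + g$ with $\tD(g)\le d$ and with coefficient $0$ at the monomial $x_1\cdots x_d = \mathbfsl{x}^{\mathbf{1}_d}$; Lemma~\ref{Lempclonoids} then yields $rx_1\cdots x_d \in \langle\{rx_1\cdots x_d+g\}\rangle_{\ZZ_p^n}\subseteq\langle\{f\}\rangle_{\ZZ_p^n}$. I write the distinguished top-degree monomial of $f$ as $m = r\,\mathbfsl{x}^{\mathbfsl{a}}$ with $\mathbfsl{a}\in[p-1]_0^s$ and $\sum_i a_i = d$, and let $I=\{i:a_i>0\}$ be its support.

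The central device is a \emph{polarization} substitution. For each $i\in I$ I would introduce $a_i$ fresh variables $z_{i,1},\dots,z_{i,a_i}$ (so that $d$ new variables arise in total) and substitute $x_i \mapsto \sum_{j=1}^{a_i} z_{i,j}$, while sending every variable $x_i$ with $a_i=0$ to $0$. This is a $\ZZ_p$-linear substitution, i.e. it is of the form $f(M\cdot(x_1,\dots,x_l)^t)$ for a $0/1$ matrix $M$ over $\ZZ_p$, and hence stays inside the clonoid by item~$(2)$ of Definition~\ref{DefPolyClonoid}. Because each variable is replaced by a form of degree $\le 1$, the total degree cannot increase, so the resulting polynomial $\phi(f)$ still satisfies $\tD(\phi(f))\le d$.

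Next I would compute the coefficient of the squarefree monomial $Z:=\prod_{i\in I}\prod_{j=1}^{a_i} z_{i,j}$ in $\phi(f)$. The key point — and the step that needs the most care — is that $Z$ receives a contribution from the single monomial $m$ only: any monomial $r'\mathbfsl{x}^{\mathbfsl{b}}$ of $f$ with $b_i>0$ for some $i\notin I$ is killed by the substitution, and for the remaining monomials the factor $(\sum_{j=1}^{a_i} z_{i,j})^{b_i}$ can contain $\prod_{j=1}^{a_i} z_{i,j}$ only when $b_i=a_i$, forcing $\mathbfsl{b}=\mathbfsl{a}$; since $f$ is written in the normal form $\ZZ_p^n[X]_p$ there is a unique monomial of each multidegree, so this monomial is $m$ itself. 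From $m$ the coefficient of $Z$ equals $r\prod_{i\in I}a_i!$, and since each $a_i\le p-1$ the scalar $c:=\prod_{i\in I}a_i!$ is a unit of $\ZZ_p$. Multiplying $\phi(f)$ by $c^{-1}\in\ZZ_p$ (item~$(1)$) and relabelling the $d$ fresh variables as $x_1,\dots,x_d$ (again a linear substitution) produces a polynomial $F$ in the clonoid with $\tD(F)\le d$ and coefficient $r$ at $x_1\cdots x_d$. Writing $F = rx_1\cdots x_d + g$, the remainder $g$ satisfies $\tD(g)\le d$ and has coefficient $0$ at $\mathbfsl{x}^{\mathbf{1}_d}$, so Lemma~\ref{Lempclonoids} closes the argument.

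The main obstacle is exactly the coefficient bookkeeping of the third step: one must verify rigorously that no monomial other than $m$ contributes to the coefficient of $Z$, which rests on the normal-form uniqueness of $\ZZ_p^n[X]_p$ and on the multidegree matching $b_i=a_i$. Everything else is a direct transcription of the $\ZZ_p$-argument of \cite[Lemma~$3.9$]{Kre.CFSO}; the only genuinely new feature is that the coefficients now live in the ring $\ZZ_p^n$ rather than in the field $\ZZ_p$. This causes no difficulty, because the scalars used in items~$(1)$ and~$(2)$ are still drawn from $\ZZ_p$ and the multinomial coefficients $a_i!$ are invertible elements of $\ZZ_p$, so the possible zero divisors in $r\in\ZZ_p^n$ never need to be inverted — Lemma~\ref{Lempclonoids} already accommodates an arbitrary $r\in\ZZ_p^n$.
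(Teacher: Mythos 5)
Your proof is correct and follows essentially the same route as the paper's: polarize the distinguished top-degree monomial into a squarefree one by $\ZZ_p$-linear substitutions staying inside the clonoid, and then invoke Lemma~\ref{Lempclonoids}. The only difference is that you perform the polarization in a single step, extracting the multinomial coefficient $\prod_{i\in I} a_i!$ (invertible since each $a_i\le p-1$ in the normal form $\ZZ_p^n[X]_p$), whereas the paper decrements one exponent at a time via $x_j\mapsto x_j+x_{u+1}$ and inverts the binomial coefficient $\binom{s_j}{1}=s_j$ at each step; your key uniqueness check that only $m$ contributes to the coefficient of $Z$ is the same multidegree-matching observation the paper uses at each iteration.
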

	
	\begin{proof}
		Let $f = \sum_{\mathbfsl{m} \in [p-1]_0^l}r_{\mathbfsl{m}}\mathbfsl{x}^{\mathbfsl{m}} \in \ZZ_{p}^n[x_1,\dots,x_l]_p$, $C := \langle\{f\}\rangle_{\ZZ_{p}^n}$, let $d := \tD(f)$, and let $m = r\mathbfsl{x}^{\mathbfsl{s}}$ be a monomial of $f$ with $\tD(m) = d$. Without loss of generality we suppose $s_j > 0$ for all $j \in [u]$ for some $u \leq l$ and $s_j =0$ otherwise. We prove by case distinction that $rx_1 \dots x_d \in \langle\{f\}\rangle_{\ZZ_{p}^n}$.
		
		Case $s_j =1$ for all $j \in [u]$: then clearly there exists $g$ with $\tD(g) \leq d$ such that $r\mathbfsl{x}^{\mathbfsl{s}} + g = rx_1 \dots x_d + g\in C$ and the coefficient of  $\mathbfsl{x}^{\mathbfsl{s}}$ in $g$ is $0$. By Lemma \ref{Lempclonoids} we have that $rx_1 \dots x_d \in C$.
		
		Case $\exists j \in [u]$ with $s_j >1$: then we show that there exist $h$ and $g$ such that $h+g \in C$ with $h = r\prod_{i\in[u+1]} x_i^{t_i}$ and $\mathbfsl{t} = (s_1,\dots,s_{j-1},s_j -1,s_{j+1},\dots, s_u, 1)$. Furthermore, $g$ satisfies $\tD(g) \leq d$ and the coefficient of  $\mathbfsl{y}^{(s_1,\dots,s_{j-1},s_j -1,s_{j+1},\dots, s_u, 1)}$ in $g$ is $0$, where we denote by $\mathbfsl{y}$ the vector of variables $(x_1,\dots,x_{u+1})$. Let $g' = f-m \in \ZZ_{p}^n[x_1,\dots,x_l]_p$. Let $g'' := g' \circ_{(j)} (x_j + x_{u+1})$. Thus:
		
		\begin{equation*}
			\begin{split}
				(m+g') \circ_{(j)} (x_j + x_{u+1}) =& r\mathbfsl{x}^{\mathbfsl{s}}\circ_{(j)} (x_j + x_{u+1})+ g' \circ_{(j)} (x_j + x_{u+1})
				\\=&r( \sum_{k \in [s_j]_0} {s_j \choose k}x^{s_j-k}_jx_{u+1}^k)  \cdot \prod_{i \in [u]\backslash\{j\}}x_i^{s_i} + g''
				\\=&r \cdot s_j\cdot \mathbfsl{y}^{(s_1,\dots,s_{j-1},s_j-1,s_{j+1},\dots, s_u, 1)} +
				\\+&r( \sum_{k \in [s_j]_0\backslash\{1\}} {s_j\choose k}x^{s_j -k}_jx_{u+1}^k)\cdot \prod_{i \in [u]\backslash\{j\}}x_i^{s_i} + g''.
			\end{split}
		\end{equation*}
		Note that $s_j$ is invertible in $\ZZ_p$ and that $h+g = s_j^{-1} (m+g') \circ_{(j)} (x_j + x_{u+1})$ is in $C$ with:
		
		\begin{equation*}
			\begin{split}
				&h :=r \cdot \mathbfsl{y}^{(s_1,\dots,s_{j-1},s_j -1,s_{j+1},\dots, s_u, 1)}
				\\&g= s_j^{-1}r( \sum_{k \in [s_j]_0\backslash\{1\}}{s_j\choose k}x^{s_j-k}_jx_{u+1}^k)\cdot \prod_{i \in [u]\backslash\{j\}}x_i^{s_i} + s_j^{-1}g''.
			\end{split}
		\end{equation*}
		Then $h$ satisfies $\tD(h) = d$ with degree $\mathbfsl{t}$. Furthermore, $g$ satisfies $\tD(g) \leq d$ and the coefficient of  $\mathbfsl{y}^{(t_1,\dots,t_{j-1},t_j -1,t_{j+1},\dots, t_w, 1)}$ in $g$ is $0$. Thus $h$ and $g$ are the searched polynomials. This implies that $rx_1 \dots x_d + g''' \in C$ for some $g''' \in \ZZ_{p}^n[X]_p$ with $\tD(g''') \leq d$ and such that the coefficient of  $\mathbfsl{x}^{\mathbf{1}_d}$ in $g'''$ is $0$. By Lemma \ref{Lempclonoids} we have that $rx_1 \dots x_d \in C$ and the claim holds.
	\end{proof}
	
	We are now ready to prove that an $\ZZ_{p}^n$-polynomial linearly closed clonoid generated by an element $f\in\ZZ_{p}^n[X]_p$ contains every monomial of $f$.
	
	\begin{Lem}
		\label{LemMoninZpq}
		Let $p$ be a prime and let $f \in \ZZ_{p}^n[X]_p$ be such that $h = r_{\mathbfsl{m}}\mathbfsl{x}^{\mathbfsl{m}}$ is a monomial of $f$. Then $h \in \langle f \rangle_{ \ZZ_{p}^n}$.
	\end{Lem}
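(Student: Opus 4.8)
The plan is to induct on the total degree $d := \tD(f)$, extracting the monomials of maximal total degree first by means of Lemma \ref{LemFoundPcloni} and then passing to the polynomial obtained by deleting them, which has strictly smaller total degree and still belongs to $\langle f\rangle_{\ZZ_p^n}$. The base case $d = 0$ is immediate, since then $f$ is a single constant monomial and $f \in \langle f\rangle_{\ZZ_p^n}$ trivially.

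For the inductive step I would first fix a monomial $h = r\mathbfsl{x}^{\mathbfsl{m}}$ of $f$ of maximal total degree $\tD(h) = d$, say with $\mathbfsl{m}$ supported on the positions $j_1 < \dots < j_u$ and $\sum_t m_{j_t} = d$. Lemma \ref{LemFoundPcloni} yields $r x_1\cdots x_d \in \langle f\rangle_{\ZZ_p^n}$, and to recover $h$ itself I would apply property $(2)$ of Definition \ref{DefPolyClonoid} with the $0$-$1$ substitution matrix $M$ chosen so that $M\cdot(x_1,\dots,x_l)^t$ repeats the variable $x_{j_t}$ exactly $m_{j_t}$ times; composing $rx_1\cdots x_d$ with $M$ produces $r\prod_t x_{j_t}^{m_{j_t}} = h$. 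Since each exponent $m_{j_t} \le p-1$ (as $f \in \ZZ_p^n[X]_p$), the output already lies in reduced form and $h \in \langle f\rangle_{\ZZ_p^n}$. Running over all maximal-degree monomials $h_1,\dots,h_k$ of $f$ shows each lies in $\langle f\rangle_{\ZZ_p^n}$.

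By property $(1)$ the difference $f' := f - \sum_{i=1}^k h_i$ then lies in $\langle f\rangle_{\ZZ_p^n}$; it satisfies $\tD(f') < d$, its monomials are precisely the monomials of $f$ of degree $< d$, and $f' \in \langle f\rangle_{\ZZ_p^n}$ forces $\langle f'\rangle_{\ZZ_p^n} \subseteq \langle f\rangle_{\ZZ_p^n}$. Applying the induction hypothesis to $f'$ places all of its monomials in $\langle f\rangle_{\ZZ_p^n}$, and together with $h_1,\dots,h_k$ this accounts for every monomial of $f$. The step deserving the most care---and the only genuine obstacle---is the recovery of the true monomial $r\mathbfsl{x}^{\mathbfsl{m}}$ from the normalized form $rx_1\cdots x_d$: one must verify that the chosen $M$ is a legitimate instance of property $(2)$ and, crucially, that identifying variables never raises an exponent above $p-1$, so that no reduction modulo the ideal generated by the $x_i^p - x_i$ is triggered and the composed polynomial is exactly $h$ rather than its reduced representative. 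The remaining bookkeeping is routine.
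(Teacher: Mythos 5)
Your proposal is correct and follows essentially the same route as the paper: both extract a monomial of maximal total degree via Lemma \ref{LemFoundPcloni}, recover the actual monomial $r\mathbfsl{x}^{\mathbfsl{m}}$ from $rx_1\cdots x_d$ by a variable-identifying substitution as in item $(2)$ of Definition \ref{DefPolyClonoid} (your observation that the exponents $m_{j_t}\leq p-1$ prevent any reduction modulo $x_i^p-x_i$ is exactly the point the paper compresses into ``clearly, this yields $h\in\langle f\rangle_{\ZZ_p^n}$''), then subtract and induct. The only difference is cosmetic: the paper inducts on the number of monomials, peeling off one top-degree monomial at a time, whereas you induct on the total degree and remove all top-degree monomials in one pass.
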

	
	\begin{proof}
		The proof is by induction on the number $k$ of monomials in $f$.
		
		Base step $k = 1$: then clearly the claim holds.
		
		Induction step $k>0$: suppose that the claim holds for every $g$ with $k-1$ monomials. Let $f$ be a polynomial with $k$ monomials. Let $d = \tD(f)$ and let $h$ be a monomial in $f$ with degree $d$ and coefficient $r_h$. By Lemma \ref{LemFoundPcloni}, we have that $r_{h}x_1\cdots x_d \in \langle f \rangle_{\ZZ_{p}^n}$. Clearly, this yields $h \in \langle f \rangle_{\ZZ_{p}^n}$. From the induction hypothesis we have that all $k-1$ monomials of $f -h$ are in $\langle f -h \rangle_{\ZZ_{p}^n} \subseteq \langle f  \rangle_{\ZZ_{p}^n}$. Thus all monomials of $f$ are in $\langle f  \rangle_{\ZZ_{p}^n}$.
	\end{proof}
	
	In this section we have to deal with polynomials whose coefficients are finitary functions from $\FF_i$ to $\ZZ_{p_i}$, where $\FF_{i} = \prod_{j \in [m]\backslash\{i\}}\ZZ_{p_j}$ and $p_1,\dots,p_m$ are distinct primes. In order to connect this strategy with the clones of an expanded group we define a non-standard concept of induced functions of a polynomial. Let $A$ be a set with a fixed element $0$. For every polynomial $f \in \mathbf{R}^{A^n}$ $[x_1,\dots,x_k]_p$ of the form $f =\sum_{\mathbfsl{m} \in [p-1]_0^k}r_{\mathbfsl{m}}\mathbfsl{x}^{\mathbfsl{m}}$ we define its $s$-ary \emph{induced function} $\overline{f}^{[s]}\colon R^s \times A^s \rightarrow R \times A$ by:
	
	\begin{equation*}
		(\mathbfsl{x},\mathbfsl{y}) \mapsto (\sum_{\mathbfsl{m} \in [p-1]_0^k} r_{\mathbfsl{m}}(\mathbfsl{y'})\prod_{i =1}^kx_i^{m_i},\mathbf{0}),
	\end{equation*}
	with $s \geq k,n$ and $\mathbfsl{y'}= (y_1,\dots,y_n)$. We can observe that we induce also the functions $\{r^{\mathbfsl{m}}\}_{\mathbfsl{m} \in [p-1]_0^k}$ coefficients of monomials in $f$ and for this reason we require $s \geq n$. From now on, when not specified, $s =  \max(k,n)$, indeed we want an arity of the induced function sufficiently large to induce both the monomials and the coefficients of the function induced. Next we show a lemma that connects the monomials of an $(\ZZ_p^{\prod_{i=1}^m\ZZ_{q_i}^n})$-polynomial linearly closed clonoid to functions of a clones on $\ZZ_{pq_1\cdots q_m} $.
	
	\begin{Lem}
		\label{LemConnInduced-2}
		Let $p_1,\dots,p_m$ distinct primes, let $\mathbf{R}^{A} = \ZZ_{p_1}^{\prod_{i=2}^m\ZZ_{p_i}^n}$, and let $h,h_1 \in \mathbf{R}^A[X]_{p_1}$ with $h \in \langle h_1 \rangle_{ \mathbf{R}^A}$. Then $\overline{h} \in \Clg(\{\overline{h_1}\})$.
	\end{Lem}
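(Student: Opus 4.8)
\section*{Proof proposal}

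The plan is to reduce the statement to the two generating operations of the polynomial linearly closed clonoid $\langle h_1 \rangle_{\mathbf{R}^A}$ and to check that each of them is mirrored by a clone operation on the induced functions. First I would record the description of $\langle h_1 \rangle_{\mathbf{R}^A}$ as an increasing union $\bigcup_{n \in \NN} Y_n$, completely analogous to Lemma \ref{Lemclogen-2}, where $Y_0 = \{h_1\}$ and $Y_{n+1}$ is obtained from $Y_n$ by forming scalar linear combinations $af + bg$ (with $a,b \in \mathbf{R} = \ZZ_{p_1}$) and linear substitutions $f(M \cdot (x_1,\dots,x_l)^t)$ (with $M \in \ZZ_{p_1}^{k \times l}$) as in Definition \ref{DefPolyClonoid}. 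I would then prove by induction on $n$ that $\overline{h} \in \Clg(\{\overline{h_1}\})$ for every $h \in Y_n$; the base case $n=0$ is immediate since $\overline{h_1} \in \Clg(\{\overline{h_1}\})$.

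The heart of the argument consists of two observations about the induced-function map $f \mapsto \overline{f}$. Every $\overline{f}$ is supported on the first coordinate, i.e.\ it has the form $e_1(g_f)$ with $\ZZ_{p_1}$-value $\sum_{\mathbfsl{m}} r_{\mathbfsl{m}}(\mathbfsl{z})\mathbfsl{x}^{\mathbfsl{m}}$ and zeros in the remaining coordinates; in particular the map is additive. Thus for the first case I would write $\overline{af + bg} = (\ell_a \circ \overline{f}) + (\ell_b \circ \overline{g})$, where $\ell_a$ denotes the unary map $h_{(\mathbfsl{a}_1,\dots,\mathbfsl{a}_m)}$ of Remark \ref{RemLinComb} with $\mathbfsl{a}_1 = (a)$ and $\mathbfsl{a}_i = \mathbf{0}$ for $i \geq 2$, which multiplies the first coordinate by $a \in \ZZ_{p_1}$ and leaves the (already zero) $A$-coordinates untouched. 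Since $\ell_a, \ell_b \in \Clg$ and $+ \in \Clg$, the induction hypothesis gives $\overline{af+bg} \in \Clg(\{\overline{h_1}\})$.

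For the substitution $h = f(M \cdot (x_1,\dots,x_l)^t)$ with $M \in \ZZ_{p_1}^{k \times l}$ I would show $\overline{h} = \overline{f} \circ (w_1,\dots,w_s)$, where $s$ is the common arity of $\overline{f}$ and each $w_j$ is the linear map $h_{(\mathbfsl{a}_1,\dots,\mathbfsl{a}_m)}$ of Remark \ref{RemLinComb} whose first-coordinate weight vector $\mathbfsl{a}_1$ realises the substitution $x_j \mapsto \sum_{t} M_{jt}x_t$ on the $\ZZ_{p_1}$-part, and whose remaining weight vectors $\mathbfsl{a}_i$ ($i \geq 2$) are the $j$-th standard basis vector, so that the $A$-part of the $j$-th argument is passed through unchanged. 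This ensures that the coefficient functions $r_{\mathbfsl{m}}$ still receive the correct evaluation vector $\mathbfsl{z}$ while the monomials undergo the desired linear substitution. Each $w_j \in \Clg$ by Remark \ref{RemLinComb}, so the induction hypothesis $\overline{f} \in \Clg(\{\overline{h_1}\})$ yields $\overline{h} \in \Clg(\{\overline{h_1}\})$, completing the induction.

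The main subtlety to get right is the bookkeeping of arities together with the splitting of each argument into its $\ZZ_{p_1}$-part and its $A$-part: one must fix a common arity $s \geq \max(k,n)$ so that the induced functions simultaneously carry the monomial variables and the $n$ variables feeding the coefficients, and verify that the maps $w_j$ route the $A$-coordinates by projection rather than mixing them (for indices $j > k$ only the $A$-part is relevant, and for $j > n$ the coefficient does not depend on $z_j$). A related point, harmless but worth stating, is that composition of induced functions automatically computes modulo the ideal generated by $x_i^{p_1}-x_i$ (since $a^{p_1}=a$ in $\ZZ_{p_1}$), so that $\overline{f} \circ (w_1,\dots,w_s)$ equals $\overline{\rem(h)} = \overline{h}$ and the passage to reduced polynomials in $\mathbf{R}^A[X]_{p_1}$ is legitimate.
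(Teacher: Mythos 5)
Your proposal is correct and follows essentially the same route as the paper: both arguments reduce to showing that the two closure operations of Definition \ref{DefPolyClonoid} (scalar linear combinations and linear substitutions) are mirrored on induced functions by composition with the linear maps $h_{(\mathbfsl{a}_1,\dots,\mathbfsl{a}_m)}$ of Remark \ref{RemLinComb}, with the matrix rows acting on the $\ZZ_{p_1}$-part and projections routing the $A$-coordinates. The only cosmetic differences are that you make the induction over the generation layers explicit and realise $af+bg$ via two unary scalings plus $+$ where the paper uses a single binary linear map; your remarks on arity bookkeeping and reduction modulo $x_i^{p_1}-x_i$ are sound and, if anything, more careful than the paper's.
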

	
	\begin{proof}
		Let $\overline{f }, \overline{g}\in \Clg(\{\overline{h_1}\})^{[s]}$. Then  we can observe that for all $a,b \in \ZZ_{p_1}$ we have that $\overline{af + bg} = h_{((a,b),\mathbfsl{0})} \circ (\overline{f},\overline{g})$, where $\mathbfsl{0} = ((0_{\ZZ_{p_2}}, 0_{\ZZ_{p_2}}), \dots, $ $(0_{\ZZ_{p_m}}, 0_{\ZZ_{p_m}}))$ and $h_{((a,b),\mathbfsl{0})}$ is defined in Remark \ref{RemLinComb}.
		
		Furthermore for all $M \in \ZZ_{p_1}^{s \times l}$ we have that $\overline{f(M \cdot (x_1,\dots,x_l))} = \overline{f} \circ (g_1,\dots,g_s)$ where $g_i:\prod_{i \in [m]}\ZZ_{p_i}^u \rightarrow \prod_{i \in [m]}\ZZ_{p_i} $ such that:
		\begin{equation*}
			g_i: (\mathbfsl{x},\mathbfsl{y}_2,\dots,\mathbfsl{y}_m) \mapsto (M_i(x_1,\dots, x_l)^t,(y_2)_i,\dots,(y_m)_i)
		\end{equation*}
		for all $(\mathbfsl{x},\mathbfsl{y}_2,\dots,\mathbfsl{y}_m )\in \prod_{i \in [m]}\ZZ_{p_{i}}^u$ ,where $M_i$ is the $i$th row of $M$ and $u = \max(l,n)$.
		
		We know that every clone $C$ containing $\Clo( \ZZ_{p_1\cdots p_m},+)$ is closed under composition and, by Remark \ref{RemLinComb}, contains every linear mapping $h_{(\mathbfsl{a}_1,\dots,\mathbfsl{a}_m)}$ with $\mathbfsl{a}_i \in \ZZ_{p_i}^n$. Then it is clear that if a function $h$ can be generated from $h_1$ with item $(1)$ or $(2)$ of Definition \ref{DefPolyClonoid}, then the induced function $\overline{h}_1$ generates $\overline{h}$ in a clone containing $\Clo( \ZZ_{p_1\cdots p_m},+)$, simply composing $\overline{h}_1$ with the linear mappings of Remark \ref{RemLinComb} from the right and from the left.
	\end{proof}
	
	With the next two lemmata we want to prove that in order to characterize clones containing $\Clo(\ZZ_s,+)$, with $s$ squarefree, we have only to consider induced monomials with certain total degrees.
	
	\begin{Lem}
		\label{LemMonVeri-2}
		Let $d \in \NN\backslash\{1\}$. Then for all $k,l \in \NN$, for all $g \in \ZZ_p^{\prod_{i=1}^m\ZZ_{q_i}^l}$, and for all $\mathbfsl{m} \in [p-1]_0^k\backslash\{\mathbf{0}_k\}$ with $\tD(\mathbfsl{x}^{\mathbfsl{m}})  = u$ congruent to $d$ modulo $p-1$ it follows that:
		\begin{equation*}
			\overline{r\mathbfsl{x}^{\mathbfsl{m}}}\in \Clg(\{\overline{rx_1\cdots x_d}\}).
		\end{equation*}
	\end{Lem}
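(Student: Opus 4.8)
The plan is to build the target induced monomial $\overline{r\mathbfsl{x}^{\mathbfsl{m}}}$ out of the generator $\overline{rx_1\cdots x_d}$ in three stages: first inflate the generator to a \emph{multilinear} monomial of suitably large degree, then merge its variables into $k$ groups, and finally collapse the resulting exponents. Two elementary facts over $\ZZ_p$ drive everything. First, as functions on $\ZZ_p$ the powers $x^{a}$ and $x^{b}$ coincide whenever $a\equiv b\pmod{p-1}$ and $a,b\geq 1$ (every nonzero element of $\ZZ_p$ has multiplicative order dividing $p-1$, and $0^{a}=0$ for $a\geq1$); this is exactly why the hypothesis $u\equiv d\pmod{p-1}$ is the right one. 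Second, the coefficient $r$ takes values in $\ZZ_p$, so $r^{t}=r$ as a function as soon as $t\equiv 1\pmod{p-1}$. This second fact is what makes the inflation below harmless to the coefficient.

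First I would make the inflation precise. Nesting the generator into one of its own argument slots, on fresh variables, replaces a single variable by a product of $d$ variables, raising the total degree by $d-1$; here the hypothesis $d\geq 2$ is essential, since for $d=1$ nesting does not raise the degree at all. Composing the generator with itself $t$ times therefore produces, after the routing discussed below, the induced monomial $\overline{r^{t}x_1\cdots x_{D_t}}$ with $D_t=d+(t-1)(d-1)$. Choosing $t=1+j(p-1)$ with $j$ large enough that $D_t\geq u$, the second fact gives $r^{t}=r$, while $D_t\equiv d\equiv u\pmod{p-1}$. Thus $\overline{rx_1\cdots x_{D_t}}\in\Clg(\{\overline{rx_1\cdots x_d}\})$ is a multilinear monomial of degree $D_t\geq u$ lying in the same class modulo $p-1$.

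Next I would identify field variables. Since $D_t\geq u=\sum_i m_i$ and $D_t\equiv u\pmod{p-1}$, I can choose integers $c_1,\dots,c_k\geq 1$ with $c_i\equiv m_i\pmod{p-1}$ and $\sum_i c_i=D_t$ (take $c_i=m_i$ and distribute the surplus $D_t-u$, a nonnegative multiple of $p-1$, in blocks of $p-1$ among the groups). Identifying the $D_t$ field variables of $\overline{rx_1\cdots x_{D_t}}$ into $k$ groups of sizes $c_1,\dots,c_k$, which is a clone operation, yields $\overline{rx_1^{c_1}\cdots x_k^{c_k}}\in\Clg(\{\overline{rx_1\cdots x_d}\})$. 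Finally the first fact, applied variable by variable, gives the functional equality $\overline{rx_1^{c_1}\cdots x_k^{c_k}}=\overline{rx_1^{m_1}\cdots x_k^{m_k}}=\overline{r\mathbfsl{x}^{\mathbfsl{m}}}$, which closes the argument.

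The main obstacle is the coefficient bookkeeping hidden in the word ``routing''. Because every induced function is $\mathbf{0}$ on the $\FF_1=\prod_{j\neq 1}\ZZ_{p_j}$-coordinates of its output, a naive nesting evaluates the inner coefficient at $\mathbf{0}$ and destroys the dependence of $r$ on its variables. I would handle this exactly as in the proof of Lemma \ref{LemConnInduced-2}: using the linear maps $h_{(\mathbfsl{a}_1,\dots,\mathbfsl{a}_m)}$ of Remark \ref{RemLinComb}, which act independently on each $\ZZ_{p_i}$-coordinate, together with $+$, one re-supplies at every generator node the coordinates carrying the argument of $r$, so that all $t$ occurrences of $r$ are evaluated at the \emph{same} tuple $\mathbfsl{z}$ and their product is genuinely $r(\mathbfsl{z})^{t}=r(\mathbfsl{z})$, rather than a product of values of $r$ at different points. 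Checking that this routing can be carried out simultaneously through the $t$ nestings and the subsequent merges, while keeping the field part equal to the intended product of variables, is the one genuinely technical point; the degree arithmetic $D_t=d+(t-1)(d-1)$ and the existence of the multiplicities $c_i$ are routine.
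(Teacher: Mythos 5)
Your proof is correct and follows essentially the same route as the paper's: self-compose the generator $t=1+j(p-1)$ times so that $r^{t}=r$ and the multilinear degree stays congruent to $d$ modulo $p-1$, then identify variables into groups matching the exponents of $\mathbfsl{m}$. Your explicit treatment of the coefficient routing through the linear maps of Remark \ref{RemLinComb} is a detail the paper's proof leaves implicit, but it does not change the argument.
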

	
	\begin{proof}
		We can observe that composing $\overline{rx_1\cdots x_d}$ with itself we obtain that
		
		\begin{equation*}
			\overline{r^{l+1} x_1\dots x_{d+l(d-1)}} \in \Clg(\overline{rx_1\cdots x_d})
		\end{equation*}
		for all $l \in \NN$. Since $r^p = r$ yields $r^{s(p-1)+1} = r$ for all $s \in \NN$, it follows for $l = s(p - 1)$ that
		
		\begin{equation*}
			\overline{rx_1\cdots x_{d+s(p-1)(d-1)}} \in \Clg(\overline{rx_1\cdots x_d})
		\end{equation*}
		
		Let $s \in \NN$ be such that $d + s(p - 1)(d - 1) \geq \sum_{i=1}^k m_i$. Set the first $m_1$ variables in $\{x_1 , \dots , x_{d+s(p-1)(d-1)}\}$ to $x_1$, the next $m_2 $ variables to $x_2$, and so forth with the last $d + s(p - 1)(d - 1) - \sum_{i \in [k-1]} m_i$ variables set to $x_k$. This yields
		
		\begin{equation*}
			\overline{r\mathbfsl{x}^{\mathbfsl{m}}} \in \Clg(\overline{rx_1\cdots x_d}).
		\end{equation*}
	\end{proof}

	\begin{Lem}
		\label{Lemfcontmon-2}
		Let $p_1,\dots,p_m$ distinct primes, let $n \in \NN$, let $f\colon \prod_{i=1}^m\ZZ_{p_i}^n 
		$ $\rightarrow \prod_{i=1}^m\ZZ_{p_i}$ be an $n$-ary function, and let $g = (p_2\cdots p_m)^{p_1-1}f$. Let $\mathbf{R} = \ZZ_{p_1}$, $A = \prod_{i=2}^m \ZZ_{p_i}^n$, and $h \in \mathbf{R}^A[X]_{p_1}$ such that $\overline{h} = g$. Let $h'$ be a monomial of $h$ with coefficient $r$ and $d = \tD(h')$. Then it follows that:
		
		\begin{equation*}
			\overline{rx_1\cdots x_d}\in \Clg(\{f\}).
		\end{equation*}
		
	\end{Lem}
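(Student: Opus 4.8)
The plan is to chain together three facts: that $g$ already lies in $\Clg(\{f\})$ because it is an integer multiple of $f$; that the target monomial $rx_1\cdots x_d$ is generated, \emph{as a polynomial}, by $h$ inside the polynomial linearly closed clonoid $\langle h\rangle_{\mathbf{R}^A}$; and that this polynomial-level generation transfers to the induced functions via Lemma \ref{LemConnInduced-2}.

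The first point to record is structural: the coefficient ring $\mathbf{R}^A = \ZZ_{p_1}^{\FF_1^n}$ is a finite direct power of the field $\ZZ_{p_1}$, namely $\ZZ_{p_1}^N$ with $N = (p_2\cdots p_m)^n$. This remark is what makes the machinery applicable, since Lemmata \ref{LemFoundPcloni} and \ref{LemMoninZpq}, although phrased for $\ZZ_{p}^n$-polynomial linearly closed clonoids, then apply verbatim to $\langle\,\cdot\,\rangle_{\mathbf{R}^A}$. Next I would observe that $g = (p_2\cdots p_m)^{p_1-1}f$ is an integer scalar multiple of $f$, hence equals the $k$-fold sum $f+\cdots+f$ for $k = (p_2\cdots p_m)^{p_1-1}$, which is a composition of $f$ with an iterated addition term. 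Since $\Clg(\{f\})$ contains $+$ and $f$, it contains $g$; as $\overline{h}=g$ by hypothesis, this gives $\Clg(\{\overline{h}\}) = \Clg(\{g\}) \subseteq \Clg(\{f\})$.

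It then remains to produce $rx_1\cdots x_d$ at the level of polynomials. Applying Lemma \ref{LemMoninZpq} to $h\in\mathbf{R}^A[X]_{p_1}$ shows that the monomial $h'$ lies in $\langle h\rangle_{\mathbf{R}^A}$, so that $\langle h'\rangle_{\mathbf{R}^A}\subseteq\langle h\rangle_{\mathbf{R}^A}$. Viewing the single monomial $h'$ (with $\tD(h')=d$ and leading coefficient $r$) as the input polynomial of Lemma \ref{LemFoundPcloni}, where $h'$ is itself the unique, and hence a top-degree, monomial, yields $rx_1\cdots x_d\in\langle h'\rangle_{\mathbf{R}^A}\subseteq\langle h\rangle_{\mathbf{R}^A}$. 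Finally, Lemma \ref{LemConnInduced-2} applied with $h_1 = h$ converts this into $\overline{rx_1\cdots x_d}\in\Clg(\{\overline{h}\})$, and combining with $\Clg(\{\overline{h}\})\subseteq\Clg(\{f\})$ from the previous paragraph gives $\overline{rx_1\cdots x_d}\in\Clg(\{f\})$, as claimed. I do not expect a genuine obstacle here: the argument is a direct assembly of the quoted lemmata, and the only step deserving care is the structural remark that $\ZZ_{p_1}^{\FF_1^n}$ is a power of $\ZZ_{p_1}$, so that the $\ZZ_{p}^n$-versions of Lemmata \ref{LemFoundPcloni} and \ref{LemMoninZpq} may be invoked without modification.
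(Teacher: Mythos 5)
Your argument is correct and follows essentially the same route as the paper's proof: apply Lemma \ref{LemMoninZpq} to extract the monomial $h'$ from $h$, apply Lemma \ref{LemFoundPcloni} to $h'$ to obtain $rx_1\cdots x_d$ in $\langle h\rangle_{\mathbf{R}^A}$, and transfer to induced functions via Lemma \ref{LemConnInduced-2}. You merely make explicit two steps the paper leaves implicit, namely that $\mathbf{R}^A$ is a finite power of $\ZZ_{p_1}$ and that $g\in\Clg(\{f\})$ as an iterated sum of $f$.
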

	
	\begin{proof}
		Let $n$, $h$, and let $f$ be as in the hypothesis. By Lemma \ref{LemFoundPcloni}, we have that $rx_1\cdots x_d \in \langle h' \rangle_{\mathbf{R}^A}$. By Lemma \ref{LemMoninZpq}, $h'\in \langle h \rangle_{\mathbf{R}^A}$ and thus, by Lemma \ref{LemConnInduced-2},  $\overline{rx_1\cdots x_d}\in \Clg(\{f\})$.
	\end{proof}
	
	We are now ready to prove the main result of this section which allows us to provide a bound for the lattice of all clones containing the addition of a squarefree abelian group.
	
	Let $s =p_1\cdots p_m$ be a product of distinct prime numbers. Then for all $i \in [m]$ and $j \in [p_i]_0$ we define $\rho_{(i,j)}\colon \mathcal{L}( \ZZ_{s},+) $ $\rightarrow \mathcal{L}(\ZZ_{p_i},\FF_i)$ by:
	
	\begin{equation}
		\label{defembeGen-2}
		\rho_{(i,j)} (C) := \bigcup_{n\in\NN}\{f \colon\FF_i^n \rightarrow \ZZ_{p_i}\mid \overline{fx_1\cdots x_j} \in C\}
	\end{equation}
	for all $C \in \mathcal{L}( \ZZ_{s},+)$. Let $\rho\colon \mathcal{L}( \ZZ_{s},+)\rightarrow \prod_{i=1}^m\mathcal{L}(\ZZ_{p_i},\FF_i)^{p_i+1}$ be defined by $\rho(C) = (\rho_{(1,0)}(C),\dots,$ $\rho_{(1,p_1)}(C),$ $\dots,\rho_{(m,0)}(C),\dots,\rho_{(m,p_m)}$ $(C))$, for all $C \in \mathcal{L}( \ZZ_{s},+)$. 
	
	\begin{proof}[Proof of Theorem \ref{Thmgeneralembedding-2}]
		
		Let $s =p_1\cdots p_m$ be a product of distinct prime numbers. We prove that for all $i \in [m]$ and for all $j \in [p_i]_0$, the map $\rho_{(i,j)}$ is well-defined and thus $\rho$ is well-defined.
		
		Let $C \in \mathcal{L}(\ZZ_{s},+)$. Then we prove that $\rho_{(1,j)}(C)$, with $0\leq j$, is a $(\ZZ_{p_1},\FF_1)$-linearly closed clonoid. To this end let $n \in \NN$, $f,g \in \rho_{(1,j)}(C)^{[n]}$ and $a, b \in \mathbb{Z}_{p_1}$. Then $\overline{fx_1\cdots x_j}, \overline{gx_1\cdots x_j} \in C$. From the closure with $+$ we have that $\overline{(af+bg)x_1\cdots x_j} \in C$ and thus $af + bg \in \rho_{(1,j)}(C)^{[n]}$ and item $(1)$ of Definition \ref{DefClo-2} holds. Furthermore, let $u,n \in \NN$, $f \in \rho_{(1,j)}(C)^{[u]}$, $A_r \in \mathbb{Z}^{u \times n}_{p_r}$, for all $r \in [m]\backslash\{1\}$, and let $g\colon \prod_{k=2}^m\ZZ_{p_k}^n \rightarrow \ZZ_{p_1}$ be defined by:
		
		\begin{equation*}
			g\colon (\mathbfsl{x}_2,\dots,\mathbfsl{x}_m) \mapsto f(A_2\cdot \mathbfsl{x}_2^t,\cdots,A_m\cdot \mathbfsl{x}_m^t).
		\end{equation*}
		It is clear that $\overline{gx_1\cdots x_j} \in \Clg(\{\overline{fx_1\cdots x_j}\})$ as composition of $\overline{fx_1\cdots x_j}$ and linear mappings of Remark \ref{RemLinComb}. Thus $g \in \rho_{(1,j)}(C)$ which concludes the proof of item $(2)$ of Definition \ref{DefClo-2}. In the same way we can prove that $\rho_{(i,j)}$ is well-defined for all $i \in [m]$ and $j \in [p_i]_0$. Hence $\rho$ is well-defined. 
		
		We prove that $\rho$ is injective. Let $C, D \in \mathcal{L}(\ZZ_{s},+)$ with $\rho(C) = \rho(D)$. Let $f \in C^{[n]}$. By Lemma \ref{Lem2Genexprofb} we have that there exist $m$ sequences of functions $\{f_{(i,{\mathbfsl{h}_i})}\}_{\mathbfsl{h}_i \in [p_i-1]_0^n}$ from $\prod_{j \in [m]\backslash \{i\}}\ZZ_{p_j}^n$ to $\ZZ_{p_i}$, for all $i \in [m]$, such that $f$ satisfies for all $(\mathbfsl{x}_1,\dots,\mathbfsl{x}_m) \in \prod_{i=1}^m\ZZ_{p_{i}}^n$:
		
		\begin{align*}
			f(\mathbfsl{x}_1,\dots,\mathbfsl{x}_m) &= (\sum_{\mathbfsl{h}_1 \in [p_1-1]_0^n} f_{(1,{\mathbfsl{h}_1})}(\mathbfsl{x}_2,\dots,\mathbfsl{x}_m)\mathbfsl{x}_1^{\mathbfsl{h}_1}, \dots,
			\\ &\sum_{\mathbfsl{h}_m \in [p_m-1]_0^n} f_{(m,{\mathbfsl{h}_m})}(\mathbfsl{x}_1,\dots,\mathbfsl{x}_{m-1})\mathbfsl{x}_m^{\mathbfsl{h}_m}).
		\end{align*} 
		Let $w \in \mathbf{R}^A[X]_{p_1}$ be such that $\overline{w}^{[n]}  = \prod_{i=2}^mp_i^{p_1-1}f$, where $\mathbf{R}^A = \ZZ_{p_1}^{\FF_1^n}$. 
		
		Let $h = f_{\mathbfsl{l}}\mathbfsl{x}^{\mathbfsl{l}}$ be a monomial of $w$ and let $s = \tD(h)$. We prove that $\overline{h} \in D$ by case distinction.
		
		Case $s=0,1$: from Lemma \ref{Lemfcontmon-2} it follows that $\overline{h} \in C$. By Definition \ref{defembeGen-2}, $f_{\mathbfsl{l}} \in \rho_{(1,s)}(C) = \rho_{(1,s)}(D)$ and thus $\overline{h} \in D$.
		
		Case $s>1$:  let $d \in \NN$ be such that $2 \leq d \leq p_1$ and $d=s$ modulo $p_1-1$. By Lemma \ref{Lemfcontmon-2}, $C \supseteq \Clg(\{\overline{f_{\mathbfsl{l}}x_1\cdots x_s}\})$. Thus, by Lemma \ref{LemMonVeri-2}, $C \supseteq \Clg(\{\overline{f_{\mathbfsl{l}}x_1\cdots x_d}\})$ and thus $f_{\mathbfsl{l}} \in \rho_{(1,d)}(C) =  \rho_{(1,d)}(D)$. Hence $\overline{f_{\mathbfsl{l}}x_1\cdots x_d} \in D$ and, by Lemma \ref{LemMonVeri-2}, it follows that $\overline{f_{\mathbfsl{l}}\mathbfsl{x}^{\mathbfsl{l}}}  \in D$. This holds for a generic induced monomial in $\prod_{i=2}^mp_i^{p_1-1}f$ and thus the function $\prod_{i=2}^mp_i^{p_1-1}f\in D$. With the same strategy we can prove that $\prod_{i \in [m]\backslash \{j\}}p_i^{p_j-1}f \in D$ for all $j \in [m]$ and thus $f = \sum_{j \in [m]} \prod_{i \in [m]\backslash \{j\}}$ $p_i^{p_j-1}f \in D$. Hence $C \subseteq D$. With the same proof we have the other inclusion and thus $\rho$ is injective.
	\end{proof}
	
	Note that $\rho$ is only an injective function and not a lattice embedding. This happens because the $(\ZZ_{p_i},\FF_i)$-linearly closed clonoids that describe a clone in $\mathcal{L}(\ZZ_s,+)$ have several closure properties that are not preserved by the $(\ZZ_{p_i},\FF_i)$-linearly closed clonoid lattice join.

	\begin{proof}[Proof of Corollary \ref{Corfinale-2}]
		
		The proof follows from Theorems \ref{Thmgeneralembedding-2}, \ref{ThEmbClonoids-2} and we observe the fact that the only clones in common in the embeddings of Theorem \ref{ThEmbClonoids-2} is the clone of all linear mappings. For this reason we subtract $m-1$ from the left hand side of the inequalities.	
	\end{proof}

	\begin{Cor}
		
		\label{Corfinale2}
		Let $s = p_1\cdots p_m \in \NN$ be a product of distinct primes and let $\FF_i = \prod_{j \in [m]\backslash \{i\}} \ZZ_{p_j}$ for all $i \in [m]$. Then the number of clones containing $\Clo(\ZZ_{s},+)$ is bounded by:
		
		\begin{equation*}
			|\mathcal{L}(\ZZ_{s},+)| \leq \prod_{i=1}^m(\sum_{1 \leq r \leq n_i}{{ n_i}\choose{r}}_{p_i})^{p_i+1}
		\end{equation*}
		where $n_i = \prod_{j \in [m]\backslash \{i\}}p_j$ and 
		\begin{equation*}
			{{n}\choose{k}}_q = \prod_{i=1}^k \frac{q^{n-k+i}-1}{q^i-1}.
		\end{equation*}
	\end{Cor}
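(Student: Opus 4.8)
The plan is to derive the stated inequality by feeding the explicit cardinality bound for the clonoid lattices $\mathcal{L}(\ZZ_{p_i},\FF_i)$ from \cite{Fio.CSOF2} into the general product bound already available from Theorem \ref{Thmgeneralembedding-2}. Thus the argument splits into a purely combinatorial reduction and a single enumeration step, and all the actual content lives in the latter.

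First I would record, directly from the injectivity of $\rho$ in Theorem \ref{Thmgeneralembedding-2} (equivalently, the right-hand inequality of Corollary \ref{Corfinale-2}), that
\[
	|\mathcal{L}(\ZZ_{s},+)| \leq \prod_{i=1}^m |\mathcal{L}(\ZZ_{p_i},\FF_i)|^{p_i+1}.
\]
This isolates the one quantity that still has to be estimated, namely $|\mathcal{L}(\ZZ_{p_i},\FF_i)|$ for each fixed $i \in [m]$; note that the exponent $p_i+1$ is exactly the number of component maps $\rho_{(i,j)}$ with $j \in [p_i]_0$ that assemble into $\rho$.

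Second, I would invoke the classification of $(\FF,\KK)$-linearly closed clonoids from \cite[Theorems $1.2$ and $1.3$]{Fio.CSOF2} in the special case $\FF = \ZZ_{p_i}$ and $\KK = \FF_i$. By closure under property $(1)$ of Definition \ref{DefClo-2}, the unary functions $\FF_i \rightarrow \ZZ_{p_i}$ contained in such a clonoid form an $\FF_{p_i}$-subspace of the space of all such functions, and since $|\FF_i| = \prod_{j \in [m]\backslash\{i\}} p_j = n_i$ this ambient space is the $n_i$-dimensional $\FF_{p_i}$-vector space $\ZZ_{p_i}^{\FF_i}$. The classification assigns to each $(\ZZ_{p_i},\FF_i)$-linearly closed clonoid such a subspace, and distinct clonoids give distinct subspaces, so $|\mathcal{L}(\ZZ_{p_i},\FF_i)|$ is bounded by the number of nonzero $\FF_{p_i}$-subspaces of $\FF_{p_i}^{n_i}$. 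Since ${{n_i}\choose{r}}_{p_i}$ counts precisely the $r$-dimensional subspaces of $\FF_{p_i}^{n_i}$ (as one checks from the displayed product formula), summing over $1 \leq r \leq n_i$ yields
\[
	|\mathcal{L}(\ZZ_{p_i},\FF_i)| \leq \sum_{1 \leq r \leq n_i} {{n_i}\choose{r}}_{p_i},
\]
and substituting this into the product from the first step gives the claim.

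The main obstacle is this second step: one must extract from the structural results of \cite{Fio.CSOF2} that the enumerating invariant of a $(\ZZ_{p_i},\FF_i)$-linearly closed clonoid is genuinely an $\FF_{p_i}$-subspace of the $n_i$-dimensional space of unary functions, so that the Gaussian-binomial subspace count of $\FF_{p_i}^{n_i}$ is a legitimate upper bound. Everything else is the routine substitution together with the standard identity that ${{n_i}\choose{r}}_{p_i}$ enumerates the $r$-dimensional subspaces of $\FF_{p_i}^{n_i}$.
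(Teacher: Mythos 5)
Your proposal is correct and takes essentially the same route as the paper, whose entire proof is the one-line combination of the product bound of Corollary \ref{Corfinale-2} with the estimate $|\mathcal{L}(\ZZ_{p_i},\FF_i)|\le\sum_{1\le r\le n_i}{{n_i}\choose{r}}_{p_i}$, cited as \cite[Theorem 1.4]{Fio.CSOF2}. The only difference is that you reconstruct that cited estimate by counting $\FF_{p_i}$-subspaces of the $n_i$-dimensional space of unary functions $\FF_i\rightarrow\ZZ_{p_i}$ (using that a clonoid is determined by its unary part), which is precisely the content of the reference; the sole loose end in your version is that restricting to \emph{nonzero} subspaces silently omits the trivial clonoid whose unary part is the zero subspace, a bookkeeping detail the paper delegates entirely to the citation.
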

	
	\begin{proof}
		The proof follows from Corollary \ref{Corfinale-2} and \cite[Theorem $1.4$]{Fio.CSOF2}.
	\end{proof}
	
	We can observe that the bound of Corollary \ref{Corfinale2} is not always reached. We are ready to prove the Dichotomy of Theorem \ref{ThmDichot} and the main result of this paper.
	
	\begin{proof}[Proof of Theorem \ref{ThmDichot}]
		The proof follows from Corollary \ref{Corfinale2} for an abelian group of squarefree order. By \cite{Bul.PCCT} and \cite{Kre.CFSO} a group $\mathbf{G}$ of non-squarefree order has infinitely many expansions up to term equivalence.
	\end{proof}
	
	This nice dichotomy in the behaviour of the expansions of a finite abelian group shows how different the expansions are in case of a squarefree abelian group and in case of a not squarefree one.
	
	With the next two results we can also find a concrete bound for the arity of the generators that we need to characterize these clones.

	\begin{Thm}
		\label{Cor3-2}
		Let $s = p_1\cdots p_m$ be a product of distinct prime numbers and let $\FF_i = \prod_{j\in [m]\backslash \{i\}}\ZZ_{p_j}$. Then a clone $C$ containing $\Clo(\ZZ_{s},+)$ is generated by $S = \bigcup_{i =1}^m S_i$ where:
		
		\begin{equation*}
			S_i :=\bigcup_{j=0}^{p_i}\{\overline{rx_1\cdots x_j} \mid r\colon\FF_i \rightarrow \ZZ_{p_i}, \overline{rx_1\cdots x_j} \in C\}.
		\end{equation*}
		
	\end{Thm}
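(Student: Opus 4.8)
The plan is to show that the proposed set $S$ generates $C$ by proving the two inclusions $\Clg(S) \subseteq C$ and $C \subseteq \Clg(S)$. The first inclusion is immediate: every generator $\overline{rx_1\cdots x_j} \in S_i$ is by construction required to lie in $C$, and since $C$ is a clone containing $\Clo(\ZZ_s,+)$, it follows that $\Clg(S) \subseteq C$. The substance of the theorem is therefore the reverse inclusion, which asserts that an arbitrary $f \in C$ can be built up from these induced monomials together with $+$ and the linear mappings $h_{(\mathbfsl{a}_1,\dots,\mathbfsl{a}_m)}$ of Remark~\ref{RemLinComb}.

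First I would take an arbitrary $n$-ary $f \in C$ and apply the splitting $f = \sum_{j \in [m]} \prod_{i \in [m]\backslash\{j\}} p_i^{p_j-1} f$ already used in Remark~\ref{RemLinComb} and in the proof of Theorem~\ref{Thmgeneralembedding-2}, so that it suffices to show each component $g_j := \prod_{i \in [m]\backslash\{j\}} p_i^{p_j-1} f$ lies in $\Clg(S)$. Fixing $j=1$ without loss of generality, by Lemma~\ref{Lem2Genexprofb} the function $g_1$ is the image under the induced-function map of a polynomial $w \in \mathbf{R}^A[X]_{p_1}$, where $\mathbf{R}^A = \ZZ_{p_1}^{\FF_1^n}$. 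I would then argue monomial by monomial: for each monomial $h = f_{\mathbfsl{l}}\mathbfsl{x}^{\mathbfsl{l}}$ of $w$ with $s := \tD(h)$, Lemma~\ref{Lemfcontmon-2} gives $\overline{f_{\mathbfsl{l}} x_1 \cdots x_s} \in \Clg(\{f\}) \subseteq C$. Reducing the total degree modulo $p_1 - 1$ via Lemma~\ref{LemMonVeri-2}, one finds the representative degree $d$ with $2 \leq d \leq p_1$ (or $d \in \{0,1\}$ in the low-degree cases), so that $f_{\mathbfsl{l}} \in \rho_{(1,d)}(C)$ and hence $\overline{f_{\mathbfsl{l}} x_1 \cdots x_d} \in S_1 \subseteq S$. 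A second application of Lemma~\ref{LemMonVeri-2} recovers the original induced monomial $\overline{f_{\mathbfsl{l}}\mathbfsl{x}^{\mathbfsl{l}}}$ inside $\Clg(S)$.

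Summing over the finitely many monomials of $w$, using closure of $\Clg(S)$ under $+$, yields $g_1 = \overline{w} \in \Clg(S)$; repeating for each $j \in [m]$ and summing gives $f \in \Clg(S)$, establishing $C \subseteq \Clg(S)$ and completing the argument. This mirrors exactly the reasoning of the injectivity half of the proof of Theorem~\ref{Thmgeneralembedding-2}, the only difference being that there the target was membership in a second clone $D$, whereas here the target is membership in the clone generated by $S$.

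The step I expect to be the main obstacle is verifying cleanly that each induced monomial generator genuinely has arity within the claimed bound and that the degree-reduction argument via Lemma~\ref{LemMonVeri-2} lands on an exponent $d \leq p_i$, so that the generator indeed belongs to one of the $S_i$ with $j \in [p_i]_0$. This is where the residue-modulo-$(p_i-1)$ bookkeeping must be handled carefully, since a monomial of large total degree must be shown to be recoverable from a generator of total degree at most $p_i$; the key points are that $r^{p_i} = r$ collapses the relevant exponents and that substituting variables (as in Lemma~\ref{LemMonVeri-2}) reconstructs arbitrary monomial shapes from $\overline{rx_1\cdots x_d}$. Once this arity control is in place, the bound $\max(p_1,\dots,p_m)$ of Corollary~\ref{CorArFun-2} follows directly.
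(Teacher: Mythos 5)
Your overall architecture (split $f$ via the idempotents $\prod_{i\neq j}p_i^{p_j-1}$, pass to the polynomial $w$ with $\overline{w}=\prod_{i=2}^m p_i^{p_1-1}f$, extract each monomial via Lemmata \ref{LemMoninZpq}, \ref{LemConnInduced-2}, \ref{Lemfcontmon-2}, reduce the total degree to some $d\le p_1$ via Lemma \ref{LemMonVeri-2}, and resum) matches the paper's proof. But there is a genuine gap at the step where you conclude ``$f_{\mathbfsl{l}} \in \rho_{(1,d)}(C)$ and hence $\overline{f_{\mathbfsl{l}} x_1 \cdots x_d} \in S_1$.'' Look again at the definition of $S_1$: its elements are $\overline{rx_1\cdots x_j}$ with $r\colon \FF_1\rightarrow \ZZ_{p_1}$ a \emph{unary} function. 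The coefficient $f_{\mathbfsl{l}}$ of a monomial of $w$ is an $n$-ary function $\FF_1^n\rightarrow \ZZ_{p_1}$, so $\overline{f_{\mathbfsl{l}}x_1\cdots x_d}$ is simply not an element of $S_1$ when $n>1$. What you have actually shown is that $C$ is generated by induced monomials with coefficients of arbitrary arity, which is a weaker statement; in particular it does not yield the arity bound $\max(p_1,\dots,p_m)$ of Corollary \ref{CorArFun-2}, since the arity of $\overline{rx_1\cdots x_j}$ is governed by the arity of $r$ as well as by $j$.

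The missing ingredient is the reduction of the coefficient to unary functions. The paper invokes \cite[Theorem $1.2$]{Fio.CSOF2}, which provides a set $F$ of unary functions from $\FF_1$ to $\ZZ_{p_1}$ with $\Cid(F)=\Cid(\{f_{\mathbfsl{l}}\})$, and then uses the correspondence underlying the well-definedness of $\rho_{(1,j)}$ (generation inside a $(\ZZ_{p_1},\FF_1)$-linearly closed clonoid translates into generation inside the clone, by Lemma \ref{LemConnInduced-2}) to conclude $\Clg(\{\overline{f_{\mathbfsl{l}}x_1\cdots x_i}\})=\Clg(\{\overline{gx_1\cdots x_i}\mid g\in F\})$ for all $i\in [p_1]_0$; the right-hand generators do lie in $S_1$. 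Your closing paragraph correctly flags that arity control is the delicate point, but you locate the difficulty only in the degree of the monomial part ($d\le p_i$, handled by $r^{p_i}=r$ and variable identification), not in the arity of the coefficient $r$, which is the part that actually requires the external clonoid result. Without that step the proof is incomplete.
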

	
	\begin{proof}
		
		Let $C$ be a clone containing $\Clo(\ZZ_{s},+)$ and let $f \in C^{[n]}$. By Remark \ref{Lem2Genexprofb}, for all $i \in [m]$ there exists  a sequence $\{f_{(i,{\mathbfsl{h}_i})}\}_{\mathbfsl{h}_i \in [p_i-1]_0^n}$ of functions from $\FF_i^n$ to $\ZZ_{p_i}$,  such that $f$ satisfies for all $(\mathbfsl{x}_1,\dots,\mathbfsl{x}_m) \in \prod_{i=1}^m\ZZ_{p_i}^n$:
		
		\begin{equation*}
			\begin{split}
				f(\mathbfsl{x}_1,\dots,\mathbfsl{x}_m) = & (\sum_{\mathbfsl{h}_1 \in [p_1-1]_0^n} f_{(1,{\mathbfsl{h}_1})}(\mathbfsl{x}_2,\dots,\mathbfsl{x}_m)\mathbfsl{x}_1^{\mathbfsl{h}_1}, \dots,\\&\sum_{\mathbfsl{h}_m \in [p_m-1]_0^n} f_{(m,{\mathbfsl{h}_m})}(\mathbfsl{x}_1,\dots,\mathbfsl{x}_{m-1})\mathbfsl{x}_m^{\mathbfsl{h}_m}).
			\end{split}
		\end{equation*} 
		Let $w \in \mathbf{R}^A[X]_{p_1}$ be such that $\overline{w}^{[n]}  = \prod_{i=2}^mp_i^{p_1-1}f$, where $\mathbf{R}^A = \ZZ_{p_1}^{\FF_1^n}$. Let $h = f_{\mathbfsl{l}}x^{\mathbfsl{l}}$ be a monomial of $w$ and let $s = \tD(h)$. Then, by Lemmata \ref{LemMoninZpq} and \ref{LemConnInduced-2}, we have that $\overline{h} \in C$. Furthermore, let $d \in \NN_0$ be such that if $s \not= 0,1$, then $2 \leq d \leq p_1$ and $d=s$ modulo $p_1-1$. If $s=0,1$ then $s = d$. Thus, by Lemmata \ref{LemMonVeri-2} and \ref{Lemfcontmon-2} it follows that $\Clg(\overline{h}) = \Clg(\{\overline{f_{\mathbfsl{l}}x_1\cdots x_s}\}) = \Clg(\{\overline{f_{\mathbfsl{l}}x_1\cdots x_d}\}) $. Then let us consider the $(\ZZ_{p_1},\FF_1)$-linearly closed clonoid generated by $f_{\mathbfsl{l}}$. By \cite[Theorem $1.2$]{Fio.CSOF2}, there exists a set unary functions $F$ from $\FF_1$ to $\ZZ_{p_1}$ such that $\Cid(\{F\}) = \Cid(\{f_{\mathbfsl{l}}\})$. Hence, by the embedding of Theorem \ref{Thmgeneralembedding-2} \eqref{defembeGen-2}, we have that  $\Clg(\{\overline{f_{\mathbfsl{l}}x_1\cdots x_i}\}) =\Clg(\{\overline{gx_1\cdots x_i}\mid g \in F\})$ for all $i \in [p_1]_0$. Hence $\overline{h} \in \Clg(S_1)$ and thus $\prod_{i=2}^mp_i^{p_1-1}f \in \Clg(S_1)$ since $\Clg(S_1)$ contains every induced monomial in $\prod_{i=2}^mp_i^{p_1-1}f$.
		
		In the same way we can observe that $\prod_{j \in [m]\backslash \{i\}}p_j^{p_i-1}f \in \Clg(S_i)$ for all $i \in [m]$ and thus $f = \sum_{i\in [m]}\prod_{j \in [m]\backslash \{i\}}p_j^{p_i-1}f \in \Clg(\bigcup_{i \in [m]} S_i)$ and the claim holds.
	\end{proof}
	
	The proof of Corollary \ref{CorArFun-2} follows directly from Theorem \ref{Cor3-2} and gives an important connection between a clone $C$ containing $\Clo(\ZZ_{s},+)$ and its subsets of generators $S_i$, where $s$ is a product of distinct primes. Theorem \ref{Cor3-2} gives a possibly redundant list of generators for a clone containing $\Clo(\ZZ_{s},+)$ which shows how deep the link between clonoids and clones is. The generators of Theorem \ref{Cor3-2} are a product of a unary member of a set of generators for an $(\ZZ_{p_i},\FF_i)$-linearly closed clonoid and a monomial generating a clone on $\ZZ_{p_i}$. This unifies the characterization in \cite{Kre.CFSO} and \cite[Theorem $1.2$]{Fio.CSOF2} and is the main reason why Theorem \ref{Thmgeneralembedding-2} works. This also justifies the use of polynomials of $\ZZ_p^n[X]$ done in this section to represent functions of a clone of a squarefree abelian group and gives a different perspective to these functions.
	
	Using \cite[Theorem $1.3$]{Fio.CSOF} we can refine Corollary \ref{Corfinale-2} to the following version for clones containing the addition of $\ZZ_{pq}$, with $p,q$ distinct primes.
	
	\begin{Cor}
		\label{Corfinale}
		Let $p$ and $q$ be distinct prime numbers. Let $\prod_{i =1}^n p_i^{k_i}$ and $\prod_{i =1}^s r_i^{d_i}$ be the factorizations of $g_p= x^{q-1} -1$ in $\mathbb{Z}_p[x]$ and of $g_q = x^{p-1} -1$ in $\mathbb{Z}_q[x]$ for irreducible $p_i$, $q_i$, respectively. Then:
		
		\begin{equation*}
			\begin{split}
				&2(\prod_{i =1}^n(k_i +1) + \prod_{i =1}^s(d_i +1) ) - 1\leq |\mathcal{L}(\ZZ_{pq},+)| \leq \\&\leq 2^{p+q+2}\prod_{i =1}^n(k_i +1)^{p+1}\prod_{i =1}^s(d_i +1)^{q+1} \leq 2^{qp+q+p}.
			\end{split}
		\end{equation*}
	\end{Cor}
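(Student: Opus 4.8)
The final statement is Corollary~\ref{Corfinale}, which refines the general bound of Corollary~\ref{Corfinale-2} to the specific case $s = pq$ with $p,q$ distinct primes. The plan is to instantiate the general framework with $m=2$ and then to replace the abstract cardinalities $|\mathcal{L}(\ZZ_{p_i},\FF_i)|$ by the concrete counts provided by the classification of $(\ZZ_{p_i},\FF_i)$-linearly closed clonoids in \cite{Fio.CSOF}. Concretely, for $m=2$ we have $\FF_1 = \ZZ_q$ and $\FF_2 = \ZZ_p$, so the two lattices of clonoids appearing in Corollary~\ref{Corfinale-2} are $\mathcal{L}(\ZZ_p,\ZZ_q)$ and $\mathcal{L}(\ZZ_q,\ZZ_p)$.

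First I would invoke \cite[Theorem~$1.3$]{Fio.CSOF}, which I expect to give the exact cardinality of $\mathcal{L}(\ZZ_p,\ZZ_q)$ in terms of the factorization of $g_p = x^{q-1}-1$ over $\ZZ_p$ into irreducibles, and symmetrically for $\mathcal{L}(\ZZ_q,\ZZ_p)$ in terms of the factorization of $g_q = x^{p-1}-1$ over $\ZZ_q$. If $g_p = \prod_{i=1}^n p_i^{k_i}$ and $g_q = \prod_{i=1}^s r_i^{d_i}$ are these factorizations, then I anticipate the formula reads $|\mathcal{L}(\ZZ_p,\ZZ_q)| = \prod_{i=1}^n(k_i+1)$ and $|\mathcal{L}(\ZZ_q,\ZZ_p)| = \prod_{i=1}^s(d_i+1)$. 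The factor $2$ multiplying these products in the statement should arise because the classification of \cite{Fio.CSOF} counts clonoids via divisors together with a two-element choice (typically the trivial/full clonoid dichotomy), so the true counts are $2\prod_{i=1}^n(k_i+1)$ and $2\prod_{i=1}^s(d_i+1)$; I would trace this constant carefully from the cited theorem.

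Next I would substitute these values into both inequalities of Corollary~\ref{Corfinale-2}. For the lower bound, the general statement gives $\sum_{i=1}^m |\mathcal{L}(\ZZ_{p_i},\FF_i)| - m + 1$, which for $m=2$ becomes $|\mathcal{L}(\ZZ_p,\ZZ_q)| + |\mathcal{L}(\ZZ_q,\ZZ_p)| - 1$; inserting the counts yields $2\bigl(\prod_{i=1}^n(k_i+1) + \prod_{i=1}^s(d_i+1)\bigr) - 1$, matching the left side of the displayed chain. For the upper bound, the general statement gives $\prod_{i=1}^m |\mathcal{L}(\ZZ_{p_i},\FF_i)|^{p_i+1}$, which for $m=2$ becomes $|\mathcal{L}(\ZZ_p,\ZZ_q)|^{p+1}\cdot |\mathcal{L}(\ZZ_q,\ZZ_p)|^{q+1}$. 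Substituting gives $\bigl(2\prod(k_i+1)\bigr)^{p+1}\bigl(2\prod(d_i+1)\bigr)^{q+1} = 2^{p+q+2}\prod(k_i+1)^{p+1}\prod(d_i+1)^{q+1}$, exactly the middle expression.

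The remaining work is the final coarse estimate $2^{p+q+2}\prod_{i=1}^n(k_i+1)^{p+1}\prod_{i=1}^s(d_i+1)^{q+1} \leq 2^{qp+q+p}$, and this is the only step with a genuine inequality rather than an identity. The key elementary observation I would use is that the total degree of $g_p = x^{q-1}-1$ is $q-1$, so $\sum_{i=1}^n k_i \deg(p_i) = q-1$, which forces $\sum_{i=1}^n k_i \leq q-1$ and hence $\prod_{i=1}^n (k_i+1) \leq 2^{\sum k_i} \leq 2^{q-1}$ (using $k+1 \leq 2^k$ for $k\geq 0$); symmetrically $\prod_{i=1}^s(d_i+1) \leq 2^{p-1}$. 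Raising to the powers $p+1$ and $q+1$ gives $\prod(k_i+1)^{p+1} \leq 2^{(q-1)(p+1)}$ and $\prod(d_i+1)^{q+1} \leq 2^{(p-1)(q+1)}$, so the whole product is at most $2^{p+q+2} \cdot 2^{(q-1)(p+1)} \cdot 2^{(p-1)(q+1)} = 2^{p+q+2+(pq+q-p-1)+(pq+p-q-1)} = 2^{2pq+p+q}$. I would then check whether the stated bound $2^{qp+q+p}$ is meant to be this or whether a sharper counting of the exponents (e.g. using $\sum k_i \leq (q-1)/2$ when no linear factors occur, or a more careful divisor bound) tightens it to the single $pq$; reconciling the exponent $2pq$ from the naive estimate with the claimed $pq$ is the main obstacle, and I expect it requires the finer structural information in \cite[Theorem~$1.3$]{Fio.CSOF} about how many distinct irreducible factors $g_p$ can have rather than the crude degree bound.
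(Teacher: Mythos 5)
Your approach is exactly the paper's: the paper offers no more than the single sentence that Corollary \ref{Corfinale} follows by substituting the exact cardinalities from \cite[Theorem $1.3$]{Fio.CSOF} into Corollary \ref{Corfinale-2}, and your instantiation with $m=2$, $\FF_1=\ZZ_q$, $\FF_2=\ZZ_p$, together with the counts $|\mathcal{L}(\ZZ_p,\ZZ_q)|=2\prod_{i=1}^n(k_i+1)$ and $|\mathcal{L}(\ZZ_q,\ZZ_p)|=2\prod_{i=1}^s(d_i+1)$, reproduces both the lower bound and the middle expression correctly (your reading of the factor $2$ is confirmed by the fact that it is the only value consistent with the displayed lower bound $2(\prod(k_i+1)+\prod(d_i+1))-1$). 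Your worry about the final inequality is not a defect of your argument but, as far as I can tell, of the statement itself: your estimate $\prod(k_i+1)\le 2^{q-1}$ and $\prod(d_i+1)\le 2^{p-1}$ is sharp (attained when the relevant polynomial splits into distinct linear factors), and it yields $2^{2pq+p+q}$, not $2^{pq+p+q}$. Indeed, already for $p=2$, $q=3$ one has $g_p=x^2-1=(x+1)^2$ in $\ZZ_2[x]$ and $g_q=x-1$ in $\ZZ_3[x]$, so the middle term equals $2^{7}\cdot 3^{3}\cdot 2^{4}=55296$ while $2^{pq+p+q}=2^{11}=2048$; the printed rightmost inequality therefore fails, and no ``finer structural information'' will rescue it. You should simply record the final coarse bound as $2^{2pq+p+q}$ (or drop it); everything else in your proposal is complete and matches the intended proof.
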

	
	\section*{Acknowledgements}
	
	The author thanks Erhard Aichinger, who inspired this paper, and Sebastian Kreinecker for many hours of fruitful discussions. The author thanks the referee for his/her useful suggestions.

	\bibliographystyle{alpha}
	
\end{document}